\providecommand{\U}[1]{\protect\rule{.1in}{.1in}}
\let\orgdescriptionlabel\descriptionlabel
\renewcommand*{\descriptionlabel}[1]{%
	\let\orglabel\label
	\let\label\@gobble
	\phantomsection
	\edef\@currentlabel{#1}%
	\let\label\orglabel
	\orgdescriptionlabel{#1}%
}
\theoremstyle{plain}
\newtheorem{theorem}{Theorem}[section]
\newtheorem{lemma}[theorem]{Lemma}
\newtheorem{corollary}[theorem]{Corollary}
\newtheorem{example}[theorem]{Example}
\newtheorem{proposition}[theorem]{Proposition}
\newtheorem{remark}[theorem]{Remark}
\theoremstyle{definition}
\newtheorem{definition}[theorem]{Definition}
\newtheorem*{definition*}{Definition} 
\newtheorem{problem}{Problem}[section]
\numberwithin{equation}{section}
\DeclareMathAlphabet{\mathpzc}{OT1}{pzc}{m}{it}
\newcommand{\va}{\varphi}
\DeclareMathAlphabet\EuScript{U}{eus}{m}{n}
\SetMathAlphabet\EuScript{bold}{U}{eus}{b}{n}
\def\raisefix#1{
	\ifx#1\displaystyle
	\raise.39ex
	\else
	\ifx#1\textstyle
	\raise.39ex
	\else
	\ifx#1\scriptstyle
	\raise.275ex
	\else
	\raise.150ex
	\fi
	\fi
	\fi
}
\def\stylefix#1{
	\ifx#1\displaystyle
	\scriptstyle
	\else
	\ifx#1\textstyle
	\scriptstyle
	\else
	\ifx#1\scriptstyle
	\scriptscriptstyle
	\else
	\scriptscriptstyle
	\fi
	\fi
	\fi
}
\DeclareFontFamily{U}{mathx}{\hyphenchar\font45}
\DeclareFontShape{U}{mathx}{m}{n}{
	<5> <6> <7> <8> <9> <10>
	<10.95> <12> <14.4> <17.28> <20.74> <24.88>
	mathx10
}{}
\DeclareMathAlphabet{\mathsfit}{OT1}{cmss}{m}{sl}
\newcommand{\subjclass}[1]{\textbf{AMS Subject Classifications (2020):} #1\par}
\newcommand{\keywords}[1]{\textbf{Keywords:} #1\par}
\title{A Class of Functionals on the Sequence Space \(s\) Satisfying the Palais-Smale Condition}
\author{Kaveh Eftekharinasab}
\date{}
\begin{document}
	
	\maketitle

\begin{abstract}
	We introduce a class of functionals on the  space of rapidly decreasing sequences \( s \), called \( \mathcal{F}_s \)-functionals, defined as decomposable sums of quadratic and convex terms with quadratic growth. We prove that such functionals satisfy the Palais--Smale condition and admit a unique global minimum. Furthermore, we show that the Palais--Smale condition is preserved under linear homeomorphisms. This allows us to construct corresponding functionals satisfying the Palais--Smale condition on Fr\'echet spaces isomorphic to \( s \). We then show how this framework provides a tool  for the formulation and  proof of existence and uniqueness of solutions for specific operator  problems, where coupled infinite-dimensional systems are transformed into  diagonalized problems in the space \(s\). 
\end{abstract}

\bigskip

	\let\thefootnote\relax\footnotetext{
		\subjclass{46A45, 47J30, 47J05.}
		\,\,\,\,\,\keywords{Palais--Smale condition, \(\mathcal{F}_s\)-Functionals,  sequence space \(s\),  
			nonlinear operator equations.}
	    \, \,This work was supported by grants from the Simons Foundation (SFI-PD-Ukraine-0001486, K.A.E)}
	
	\section{Introduction}

	In this paper, we introduce a class of functionals on the sequence space \( s \), called \( \mathcal{F}_s \)-functionals. 
	These functionals are constructed from decomposable sums of quadratic and convex terms with quadratic growth,  and are well-suited for analyzing variational problems formulated in terms of spectral coefficients. We prove that  \( \mathcal{F}_s \)-functionals satisfy the Palais--Smale condition (Theorem~\ref{th:ex}) and admit a unique global minimum.
	The choice of the Fr\'echet space \( s \) is  motivated by its advantageous properties. Not only is it a fundamental example of a Montel space, a property essential for establishing the Palais--Smale condition, 
	but many important function spaces relevant to PDEs and operator equations are isomorphic to   \(s\) or its products.
	
		The Palais--Smale condition is a central compactness criterion in variational analysis, which is essential for proving the existence of critical points of functionals, particularly in infinite-dimensional settings. While well-studied in Banach and Hilbert spaces, its extension to more general, non-normable Fr\'echet spaces remains relatively unaddressed, and is crucial  for broader applications in analysis and mathematical physics.
	
	A key result  we prove  is the invariance of the Palais--Smale condition under linear homeomorphisms (Proposition~\ref{prop:ps_invariance}). This allows us to transfer this condition from 
	 \( \mathcal{F}_s \)-functionals on \( s \) to  corresponding functionals defined on a variety of isomorphic function spaces, including the Schwartz space \( \mathcal{S}(\mathbb{R}) \), the space of compactly supported smooth functions \( \mathscr{D}[a,b] \), the space of periodic smooth functions \( C_{2\pi}^\infty(\mathbb{R}) \), and the space \( C^\infty[a,b] \). For each case, we construct explicit representations of the corresponding functionals using  basis expansions (Hermite functions, Fourier series, Chebyshev polynomials). This approach reduces the problem of finding critical points on these spaces to the simpler problem of locating them on \(s\).

We illustrate how this framework enables the systematic formulation of specific nonlinear operator problems as minimization problems of \( \mathcal{F}_s \)-functionals. This transformation reduces the coupled infinite-dimensional systems into  diagonalized problems in the space \(s\). However, 
 these infinite nonlinear algebraic equations  generally do not possess   closed-form solutions.  By defining and minimizing an associated \(\mathcal{F}_s\)-functional, our method provides a  way to establish the existence, uniqueness, and regularity of solutions to these otherwise intractable systems.

\section{The Palais--Smale Condition}
 We denote the set of natural numbers by \( \mathbb{N}  \) and \(\mathbb{N}_0 = \mathbb{N} \cup \{0\}\). We assume that \( (\mathsf {F}, \textsf{Sem}({\mathsf {F}})) \) and 
 \( (\mathsf {E}, \textsf{Sem}({\mathsf {E}})) \) are Fr\'echet spaces over \( \mathbb{R} \), where 
\( \textsf{Sem}({\mathsf{F}}) = \left\{ \left\lVert \cdot \right\rVert_{\mathsf{F},n} \mid n \in \mathbb{N} \right\} \)
 and \( \textsf{Sem}({\mathsf{E}}) = \left\{ \left\lVert \cdot \right\rVert_{\mathsf{E},n} \mid n \in \mathbb{N} \right\} \)
  are increasing sequences of continuous seminorms that define the topologies of \( \mathsf {F} \) and \( \mathsf {E} \), respectively.

Let \( \mathfrak{S} \) denote the family of all compact subsets of \( \mathsf {F} \). Let \( \mathcal{L}_c(\mathsf {F}, \mathsf {E}) \) be the space of all continuous linear mappings from \( \mathsf {F} \) to \( \mathsf {E} \), endowed with the topology of compact convergence. This topology is Hausdorff and locally convex, and is defined by the family of seminorms
\[
\left\lVert \ell \right\rVert_{S,i} \coloneqq \sup \left\{ \left\lVert \ell(f) \right\rVert_{\mathsf{E},i} \mid f \in S \right\}
\]
where \( S \in \mathfrak{S} \). When \( \mathsf {E} = \mathbb{R} \) (with the standard absolute value 
\( \left\lvert \cdot \right\rvert \),  the dual space \( \mathsf {F}'_c = \mathcal{L}_c(\mathsf {F}, \mathbb{R}) \) is endowed with the topology defined by the family of seminorms \( \{ \left\lVert \cdot \right\rVert_{S} \mid  S \in \mathfrak{S}\}\).

\begin{definition}[Definition 1.0.0, \cite{ke}]\label{def:diff}
	Let \(U \subseteq  \mathsf {E}\) be open, and  $ \varphi\colon U   \to  \mathsf {F}$  a mapping. Then the  derivative
	of $\varphi$ at $x$ in the direction $h$ is defined by 
	\[
\mathrm{D}\varphi(x)(h) \coloneqq
	\lim_{t \to 0} {1\over t}(\varphi(x+th) -\varphi(x))
	\]
	whenever it exists. 
	The  mapping $\varphi$ is called differentiable at
	$x$ if $\mathrm{D} \varphi(x)(h)$ exists for all $h \in \mathsf {E}$. 
	It is called a \(C_c^1\)-mapping
	if it is differentiable at all
	points of $U$, and the  map
	\(
	\mathrm{D} \varphi \colon U  \to  \mathcal{L}_c(\mathsf {E}, \mathsf {F}) 
	\)
	is continuous. 
\end{definition}
The primary motivation for employing this class of mappings stems from the need for a suitable topology on the  dual spaces to properly define the Palais--Smale condition. These mappings are known as Keller's \( C_c^k \)-mappings. This notion of differentiability is equivalent to the well-established and widely used Michal--Bastiani notion.

\begin{definition}[Definition 1.1, \cite{eftekharinasab}]\label{def:PS}
	Let $\varphi  \colon \mathsf {F}\to \mathbb{R}$ be a Keller's $C_c^1$-functional.
	\begin{itemize}
		\item [(i)] We say that $\va$ satisfies the Palais--Smale condition (PS-condition) if every sequence $(x_i) \subset \mathsf {F}$ such that $(\varphi(x_i))$ is bounded and
		\(	 \mathrm{D} \va(x_i) \xrightarrow{\mathsf {F}_c'} 0 \),	has a convergent subsequence.
		\item[(ii)] We say that $\varphi$ satisfies the Palais--Smale condition at level $m \in \mathbb{R}$ ($(\mathrm{PS})_m$-condition)  if every sequence $(x_i) \subset \mathsf {F}$ such that
		\(
			\varphi(x_i) \to m \) and  \( \mathrm{D} \va(x_i) \xrightarrow{\mathsf {F}_c'} 0 \),
		has a convergent subsequence.
	\end{itemize}
\end{definition}
\begin{proposition}\label{prop:ps_invariance}
	Let \( F \colon \mathsf {E} \to \mathbb{R} \) be a Keller's \( C_c^1 \)-functional that satisfies the \textup{PS}-condition. Let \( L \colon \mathsf {E} \to \mathsf {F} \) be a linear Keller's \( C_c^1 \)-homeomorphism. Then the functional \( G \colon \mathsf {F} \to \mathbb{R} \) defined by
	\(
	G(y) \coloneqq F(L^{-1}(y))
	\)
	also satisfies the \textup{PS}-condition.
\end{proposition}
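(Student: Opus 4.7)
The plan is to transport a Palais--Smale sequence for \( G \) back to a Palais--Smale sequence for \( F \) via \( L^{-1} \), invoke the PS-condition for \( F \) to extract a convergent subsequence in \( \mathsf{E} \), and then push it forward through \( L \) using continuity. The linearity of \( L \) does most of the real work, as it keeps the derivative computations trivial and, together with continuity, transports compact sets between the two spaces.

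First I would take a sequence \( (y_i) \subset \mathsf{F} \) with \( (G(y_i)) \) bounded and \( \mathrm{D}G(y_i) \to 0 \) in \( \mathsf{F}'_c \), and set \( x_i := L^{-1}(y_i) \). Since \( F(x_i) = G(y_i) \), the sequence \( (F(x_i)) \) is bounded. For the derivative condition I would apply the chain rule for Keller's \( C_c^1 \)-mappings to \( G = F \circ L^{-1} \); linearity of \( L^{-1} \) forces \( \mathrm{D}L^{-1}(y) = L^{-1} \) for every \( y \in \mathsf{F} \), so
\[
\mathrm{D}G(y_i)(k) = \mathrm{D}F(x_i)\bigl(L^{-1}(k)\bigr), \qquad k \in \mathsf{F}.
\]

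The central step is to show that \( \mathrm{D}F(x_i) \to 0 \) in the topology of compact convergence on \( \mathsf{E}'_c \). Given an arbitrary compact \( S \subset \mathsf{E} \), continuity of \( L \) makes \( L(S) \subset \mathsf{F} \) compact, and the identity above rewritten as \( \mathrm{D}F(x_i)(z) = \mathrm{D}G(y_i)(L(z)) \) for \( z \in \mathsf{E} \) yields
\[
\sup_{z \in S} \bigl|\mathrm{D}F(x_i)(z)\bigr| \;=\; \sup_{w \in L(S)} \bigl|\mathrm{D}G(y_i)(w)\bigr|,
\]
which tends to zero by hypothesis. Once this is in place, the PS-condition on \( F \) delivers a subsequence \( x_{i_k} \to x \in \mathsf{E} \), and continuity of \( L \) gives \( y_{i_k} = L(x_{i_k}) \to L(x) \) in \( \mathsf{F} \).

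The main obstacle is really just bookkeeping: relating the seminorm systems on \( \mathsf{E}'_c \) and \( \mathsf{F}'_c \) through \( L \), and ensuring that no compactness input beyond the PS-condition for \( F \) is required. The open mapping theorem on Fr\'echet spaces guarantees continuity of \( L^{-1} \), and any continuous linear map is automatically Keller \( C_c^1 \) with constant derivative, so the chain rule argument is fully legitimate.
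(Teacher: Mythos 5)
Your proposal is correct and follows essentially the same route as the paper's proof: pull the PS-sequence back via \( L^{-1} \), use the chain rule with \( \mathrm{D}L^{-1} = L^{-1} \) to convert the derivative condition, transport compact sets through \( L \) to get convergence of \( \mathrm{D}F(x_i) \) in \( \mathsf{E}'_c \), and push the convergent subsequence forward by continuity of \( L \). The only superfluous remark is the appeal to the open mapping theorem, since \( L \) is assumed to be a homeomorphism and so \( L^{-1} \) is continuous by hypothesis.
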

\begin{proof}
	Let \( (y_j) \subset \mathsf {F} \) be a PS-sequence for \( G \). That is
\( (G(y_j)) \) is bounded in \( \mathbb{R} \), and
 \( \mathrm{D} G(y_j) \xrightarrow{\mathsf {F}_c'} 0  \). Let \( x_j \coloneqq L^{-1}(y_j) \in \mathsf {E} \). 
	Then
	\[
	G(y_j) = F(L^{-1}(y_j)) = F(x_j),
	\]
	so \( (F(x_j)) \) is bounded in \( \mathbb{R} \).	By the chain rule (Corollary~1.3.2 in \cite{ke}), we have
	\[
	\mathrm{D} G(y_j) = \mathrm{D} F(x_j) \circ \mathrm{D}(L^{-1})(y_j).
	\]
	Since \( L^{-1} \) is linear,  it follows that
	\[
	\mathrm{D} G(y_j)(v) = \mathrm{D} F(x_j)(L^{-1}(v)) \quad \text{for all } v \in \mathsf {F}.
	\]
	Given that \( \mathrm{D} G(y_j) \xrightarrow{\mathsf {F}_c'} 0 \), we have for any compact set \( K_{\mathsf {F}} \subset \mathsf {F} \),
	\[
	\sup_{v \in K_{\mathsf {F}}} |\mathrm{D} G(y_j)(v)| \to 0 \quad \text{as } j \to \infty.
	\]
	To verify that \( \mathrm{D} F(x_j) \xrightarrow{\mathsf {E}_c'} 0 \), let \( K_{\mathsf {E}} \subset \mathsf {E} \) be an arbitrary compact set. Since \( L \) is continuous, the image \( K_{\mathsf {F}} \coloneqq L(K_{\mathsf {E}}) \subset \mathsf {F} \) is compact.
	For each \( u \in K_{\mathsf {E}} \), write \( u = L^{-1}(L(u)) \), and 
	\[
	\mathrm{D} F(x_j)(u) = \mathrm{D} F(x_j)(L^{-1}(L(u))) = \mathrm{D} G(y_j)(L(u)).
	\]
	Thus,
	\[
	\sup_{u \in K_{\mathsf {E}}} |\mathrm{D} F(x_j)(u)| = \sup_{v \in K_{\mathsf {F}}} |\mathrm{D} G(y_j)(v)| \to 0,
	\]
	which means that \( \mathrm{D} F(x_j) \xrightarrow{\mathsf {E}_c'} 0 \). Hence, \( (x_j) \) is a PS-sequence for \( F \).
	Since \( F \) satisfies the PS-condition, there exists a convergent subsequence \( (x_{j_k}) \subset \mathsf {E} \) such that \( x_{j_k} \to x^* \in \mathsf {E} \). Applying the continuous map \( L \), we conclude
	\[
	y_{j_k} = L(x_{j_k}) \to L(x^*) \in \mathsf {F}.
	\]
	Therefore, \( (y_j) \) has a convergent subsequence in \( \mathsf {F} \), and \( G \) satisfies the PS-condition.
\end{proof}

\section{Palais--Smale Condition for \( \mathcal{F}_s \)-functionals}

In this section, we define the \( \mathcal{F}_s \)-functionals on the sequence space \( s \). We prove that these functionals satisfy the Palais--Smale condition, and show how this condition can be transferred to corresponding functionals on various isomorphic Fr\'echet spaces.

 The space \(s\) of rapidly decreasing sequences is defined by
\[
s \coloneqq \left\{ x = (x_n) \in \mathbb{R}^\mathbb{N} \;\middle|\; \forall k \in \mathbb{N}_0, \quad \| x \|_{s,k} \coloneqq \sup_n |x_n| n^k < \infty \right\}
\]
with the topology given by the increasing sequence \(\left( \left\lVert \cdot \right\rVert_{s,k} \right)_{k \in \mathbb{N}_0}\)
 of norms. The space \(s\) is a Fr\'{e}chet-Montel space. Following \cite[Examples 7.7(a)]{vo}, we  identify the strong dual of \(s\) with the space of  tempered sequences 
\[
\mathsf{t} \coloneqq s' = \bigcup_{k \in \mathbb{N}_0} \left\{ y = (y_j)_{j \in \mathbb{N}} \in \mathbb{R}^\mathbb{N} \;\middle|\; \| y \|_{s',k} \coloneqq \sup_n |y_n| n^{-k} < \infty \right\}
\]
equipped with the inductive limit topology; the space \(\mathsf {t}\) is a DF-space, see \cite[Examples 10.5(b)]{vo}.

The space \(\mathsf {t}\) is also a Montel space under the strong topology, and this strong topology coincides with the topology of compact convergence  generated by the family of seminorms
\[
\| \ell \|_{\mathsf{t},K} \coloneqq \sup_{h \in K} |\ell(h)|, \quad K \subset s \text{ is Compact}.
\]

\begin{definition}[Class \(\mathcal{F}_s\)]\label{def:Fs}
	Let \( \mathcal{F}_s \) denote the class of pairs \( (a_n, f_n)_{n \in \mathbb{N}} \) satisfying the following conditions:
\begin{enumerate}[label=A.\arabic*, ref=A.\arabic*]
	\item \label{cond:an} {Condition on $a_n$:}  
	For constants $\alpha > 0$ and $M > 0$, we have $0 < \alpha \le a_n \le M$ for all $n \in \mathbb{N}$.
	\item \label{cond:fn} {Conditions on \( f_n \):}  
	Each function \( f_n \in C^1(\mathbb{R}) \) and is convex. It satisfies the quadratic growth condition
	\[
	|f_n(t)| \le \beta_n (1 + t^2) \quad \text{for all } t \in \mathbb{R},
	\]
	for some constants \( \beta_n \ge 0 \), where the sequence \( (\beta_n)_{n \in \mathbb{N}} \in s \).
	Additionally,  there exists \( \gamma_n \ge 0 \) such that
	\(	f_n(t) \ge -\gamma_n \) for all  \(t \in \mathbb{R}\) with \( \sum_{n=1}^\infty \gamma_n < \infty \).
\end{enumerate}
\end{definition}
\begin{definition}[\( \mathcal{F}_s \)-functional]\label{def:Fs-functional}
	Let \( (a_n, f_n) \in \mathcal{F}_s \). The associated functional \( F \colon s \to \mathbb{R} \) is defined by
	\[
	F(x) \coloneqq \frac{1}{2} \sum_{n=1}^\infty a_n x_n^2 + \sum_{n=1}^\infty f_n(x_n),
	\]
	and is called an \( \mathcal{F}_s \)-functional.
\end{definition}
Let \( x = (x_n) \in s \). By definition, for every \( k \in \mathbb{N}_0 \), there exists a constant \( C_k > 0 \) such that
\(
|x_n| \leq C_k n^{-k}\) for all \(  n\).
In particular, by choosing \( k=2 \), we have
\(
|x_n| \leq C_2 n^{-2},
\)
which implies \( x_n^2 \leq C_2^2 n^{-4} \).
From Condition~\ref{cond:an}, the sequence \( (a_n) \) is bounded by some \( M > 0 \), so
\[
\sum_{n=1}^\infty a_n x_n^2 \leq M \sum_{n=1}^\infty x_n^2 \leq M C_2^2 \sum_{n=1}^\infty n^{-4},
\]
thus \(\sum_{n=1}^\infty a_n x_n^2\) converges absolutely by comparison with a convergent $p$-series. By Condition~\ref{cond:fn},  \( |f_n(t)| \leq \beta_n (1 + t^2) \) for some sequence \( (\beta_n) \in s \). Since \( (\beta_n) \in s \), there exists \( D_2 > 0 \) such that \( \beta_n \leq D_2 n^{-2} \).
Using \( |x_n| \leq C_2 n^{-2} \), we have
\[
|f_n(x_n)| \leq \beta_n (1 + x_n^2) \leq D_2 n^{-2} (1 + C_2^2 n^{-4}).
\]
 Therefore, \( \sum_{n=1}^\infty f_n(x_n) \) converges absolutely by comparison with  convergent $p$-series. Hence
 \( F \) is well-defined on \( s \).
 \begin{example}\label{ex:log-functional}
 	Let $a_n = 1 + \frac{1}{n}$ for all $n \in \mathbb{N}$. Let $\nu_n = \frac{1}{n^2}$ and $c_n = \frac{1}{(n+1)!}$. Define
 	\[
 	f_n(t) \coloneqq \nu_n \left(t \arctan(t) - \frac{1}{2}\log(1 + t^2)\right) - c_n t.
 	\]
 	Each $f_n(t) \in C^1(\mathbb{R})$ and is strictly convex. Let $\beta_n \coloneqq \frac{\pi}{2 n^2} + \frac{1}{(n+1)!}$, so $(\beta_n) \in s$.
 	To establish the quadratic growth condition, we use the readily verifiable inequality
 	\[
 	|t \arctan(t) - \tfrac{1}{2} \log(1+t^2)| \le \frac{\pi}{2} |t| \quad \text{for all } t \in \mathbb{R}.
 	\]
 	Therefore,
 	\begin{align*}
 		|f_n(t)| &\le \nu_n |t \arctan(t) - \tfrac{1}{2} \log(1+t^2)| + |c_n t| \\
 		&\le \nu_n \frac{\pi}{2} |t| + |c_n| |t|
 		= \left( \frac{\pi}{2 n^2} + \frac{1}{(n+1)!} \right) |t| = \beta_n |t| \le \beta_n (1 + t^2).
 	\end{align*}
  The function $f_n(t)$ is coercive and continuous, so it attains a global minimum. This minimum occurs at the unique critical point $t_n$, which yields
 	\[
 	\arctan(t_n) = \frac{c_n}{\nu_n} = \frac{n^2}{(n+1)!}.
 	\]
 Let $\gamma_n \coloneqq -f_n(t_n)$. Then $f_n(t) \ge -\gamma_n$ for all $t$. 
 We now verify that $\sum_{n=1}^\infty \gamma_n$ converges.	Using $t_n \approx \arctan(t_n) = c_n/\nu_n$ and the Taylor expansion 
 \[
 t\arctan(t) - \frac{1}{2}\log(1+t^2) = \frac{1}{2}t^2 + O(t^4),
 \]
 we find
 	\[
 	f_n(t_n) \approx \frac{\nu_n}{2}t_n^2 - c_n t_n \approx \frac{\nu_n}{2}\left(\frac{c_n}{\nu_n}\right)^2 - c_n\left(\frac{c_n}{\nu_n}\right) = -\frac{c_n^2}{2\nu_n}.
 	\]
 	Let $\tilde{\gamma}_n \coloneqq \frac{c_n^2}{2\nu_n} = \frac{n^2}{2((n+1)!)^2}$. The series $\sum \tilde{\gamma}_n$ converges. Since $\lim_{n\to\infty} (\gamma_n / \tilde{\gamma}_n) = 1$, the series $\sum_{n=1}^\infty \gamma_n$ converges by the limit comparison test.
 	Therefore, $(a_n, f_n)$ is a valid pair for constructing an $\mathcal{F}_s$-functional.
 \end{example}

\begin{lemma}\label{lem:lg}
	Let \( f_n \in C^1(\mathbb{R}) \) be convex and satisfy the quadratic growth condition
	\[
	|f_n(t)| \le \beta_n(1 + t^2) \quad \text{for all } t \in \mathbb{R}, \text{ for some constant } \beta_n > 0.
	\]
	Then there exists a constant \( C_n > 0 \), depending only on \( \beta_n \), such that
	\[
	|f_n'(t)| \le C_n(1 + |t|), \quad \text{for all } t \in \mathbb{R}.
	\]
	Furthermore, if \( (\beta_n) \in s \), then \( (C_n) \in s \).
\end{lemma}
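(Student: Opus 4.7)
The plan is to exploit the subgradient inequality for convex \(C^1\) functions: for all \(s, t \in \mathbb{R}\), one has \(f_n(s) \ge f_n(t) + f_n'(t)(s-t)\). Choosing \(s > t\) yields the upper bound \(f_n'(t) \le (f_n(s)-f_n(t))/(s-t)\); choosing \(r < t\) yields the lower bound \(f_n'(t) \ge (f_n(t)-f_n(r))/(t-r)\). So the derivative is sandwiched between two difference quotients, and the only remaining question is how large the numerator can be relative to the denominator when I am free to pick the step size.

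The key technical idea is that a \emph{constant} step size (say \(s = t+1\)) converts the quadratic bound \(|f_n(s)| \le \beta_n(1+s^2)\) into a bound that is still quadratic in \(t\); so I have to let the step size grow linearly with \(|t|\). I would therefore take \(s = t + (1+|t|)\) and \(r = t - (1+|t|)\), so that \(s - t = t - r = 1+|t|\) and both \(|s|, |r| \le 1 + 2|t|\). The quadratic growth condition then gives \(|f_n(s)|, |f_n(r)| \le \beta_n(1 + (1+2|t|)^2) \le 6\beta_n(1+t^2)\), and of course \(|f_n(t)| \le \beta_n(1+t^2)\). Combining the upper and lower difference-quotient bounds,
\[
|f_n'(t)| \le \frac{|f_n(s)| + |f_n(t)| + |f_n(r)|}{1+|t|} \le \frac{C \beta_n (1+t^2)}{1+|t|},
\]
for some absolute constant \(C\) (I would track it as \(C = 7\) after using \(|f_n(s)|+|f_n(t)| \le 7\beta_n(1+t^2)\) on each side separately).

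Now I would use the elementary inequality \(1+t^2 \le (1+|t|)^2\), which lets me cancel one factor of \(1+|t|\) in the denominator and obtain
\[
|f_n'(t)| \le 7\beta_n (1+|t|) \qquad \text{for all } t \in \mathbb{R}.
\]
Thus the constant \(C_n \coloneqq 7\beta_n\) works and depends only on \(\beta_n\). For the final assertion, the space \(s\) is closed under multiplication by an absolute constant, so \((\beta_n) \in s\) immediately gives \((C_n) = (7\beta_n) \in s\).

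The main obstacle is the one I flagged above: recognizing that a fixed-length step in the difference-quotient argument is not enough, since the quadratic growth then yields only \(|f_n'(t)| \lesssim \beta_n(1+t^2)\). The correct move is to let the step size scale like \(1+|t|\), after which the factor \(1+t^2\) in the numerator and the factor \(1+|t|\) in the denominator combine (via \(1+t^2 \le (1+|t|)^2\)) to give exactly the desired linear growth. Everything else is bookkeeping of constants, and the \(s\)-membership of \((C_n)\) is automatic since the constant is a fixed multiple of \(\beta_n\).
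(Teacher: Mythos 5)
Your proof is correct. The underlying mechanism is the same as the paper's --- by convexity, $f_n'(t)$ is sandwiched between a forward and a backward difference quotient, and the quadratic growth bound controls the numerators --- but your execution differs in a way that matters. The paper applies the mean value theorem on the intervals $[t,2t]$ and $[0,t]$, i.e.\ it uses a step of length $|t|$; this produces the bound $5\beta_n|t| + 2\beta_n/|t|$, which degenerates as $t \to 0$ and forces a separate case analysis for $|t| < 1$ (handled there by monotonicity of $f_n'$ together with explicit bounds on $f_n'(\pm 1)$ from adjacent unit intervals), plus a symmetric argument for $t < 0$. Your choice of step $1+|t|$, combined with the inequality $1+t^2 \le (1+|t|)^2$, collapses all of this into a single uniform estimate valid for every $t \in \mathbb{R}$ with no case split, and you land on the same constant $C_n = 7\beta_n$. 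The arithmetic checks out: $|s|,|r| \le 1+2|t|$ gives $1+(1+2|t|)^2 = 2+4|t|+4t^2 \le 6(1+t^2)$ since $(|t|-1)^2+1 > 0$, so the numerator is at most $7\beta_n(1+t^2)$ on each side. The closing observation that $s$ is stable under multiplication by a fixed scalar disposes of the last claim, exactly as in the paper. Your version is the cleaner of the two; the paper's buys nothing extra except perhaps making the role of the monotonicity of $f_n'$ more visible.
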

\begin{proof}
	Since \( f_n \) is convex and differentiable, \( f_n' \) is monotone increasing. For \( t > 0 \), we apply the mean value theorem to \( f_n \) on the intervals \( [t, 2t] \) and \( [0, t] \). This yields points \( \xi_1 \in (t, 2t) \) and \( \xi_2 \in (0, t) \) such that
	\[
	f_n'(t) \le f_n'(\xi_1) = \frac{f_n(2t) - f_n(t)}{t} \qquad \text{and} \qquad
	f_n'(t) \ge f_n'(\xi_2) = \frac{f_n(t) - f_n(0)}{t}.
	\]
	Using the bound \( |f_n(t)| \le \beta_n(1 + t^2) \), we get an upper bound
	\[
	f_n'(t) \le \frac{|f_n(2t)| + |f_n(t)|}{t}
	\le \frac{\beta_n(1 + 4t^2) + \beta_n(1 + t^2)}{t}
	= 5\beta_n t + \frac{2\beta_n}{t},
	\]
	and a lower bound
	\[
	f_n'(t) \ge \frac{-|f_n(t)| - |f_n(0)|}{t}
	\ge \frac{-\beta_n(1 + t^2) - \beta_n}{t} = -\beta_n t - \frac{2\beta_n}{t}.
	\]
	Combining these, we obtain \( |f_n'(t)| \le 5\beta_n t + \frac{2\beta_n}{t} \) for \( t > 0 \). A symmetric argument for \( t < 0 \) gives \( |f_n'(t)| \le 5\beta_n |t| + \frac{2\beta_n}{|t|} \) for all \( t \neq 0 \).
	
	For \( |t| \ge 1 \), we have 
	\[
	|f_n'(t)| \le 5\beta_n |t| + 2\beta_n|t| = 7\beta_n |t| \le 7\beta_n(1 + |t|).
	\]
	For \( |t| < 1 \), the derivative \( f_n' \) is continuous on \( [-1,1] \), so it is bounded. Since \( f_n' \) is monotone, its maximum absolute value on the interval is \( M_n = \max(|f_n'(-1)|, |f_n'(1)|) \). Using the mean value theorem on adjacent intervals, we can bound these values as follows
	\begin{itemize}
		\item Upper bound for $f_n'(1)$: $f_n'(1) \le \frac{f_n(2)-f_n(1)}{1} \le |f_n(2)|+|f_n(1)| \le 5\beta_n + 2\beta_n = 7\beta_n$.
		\item Lower bound for $f_n'(1)$: $f_n'(1) \ge \frac{f_n(1)-f_n(0)}{1} \ge -|f_n(1)|-|f_n(0)| \ge -2\beta_n - \beta_n = -3\beta_n$.
	\end{itemize}
	These two bounds imply \( |f_n'(1)| \le 7\beta_n \). Similarly, for \( f_n'(-1) \):
	\begin{itemize}
		\item Upper bound for $f_n'(-1)$: $f_n'(-1) \le \frac{f_n(0)-f_n(-1)}{1} \le |f_n(0)|+|f_n(-1)| \le \beta_n + 2\beta_n = 3\beta_n$.
		\item Lower bound for $f_n'(-1)$: $f_n'(-1) \ge \frac{f_n(-1)-f_n(-2)}{1} \ge -|f_n(-1)|-|f_n(-2)| \ge -2\beta_n - 5\beta_n = -7\beta_n$.
	\end{itemize}
	These two bounds imply \( |f_n'(-1)| \le 7\beta_n \). Therefore, \( M_n \le 7\beta_n \), which satisfies \( M_n \le 7\beta_n(1+|t|) \) for \( t \in [-1,1] \).
	
	By combining both cases, we can choose \( C_n = 7\beta_n \), which gives \( |f_n'(t)| \le C_n(1+|t|) \) for all \( t \in \mathbb{R} \). If \( (\beta_n) \in s \), then \( (C_n) = (7\beta_n) \in s \).
\end{proof}
\begin{lemma}\label{lem:Fc1}
	Let \( F \colon s \to \mathbb{R} \) be an \( \mathcal{F}_s \)-functional. Then \( F \) is a Keller's \( C^1_c \)-mapping.
\end{lemma}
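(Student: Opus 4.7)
The plan is to identify the Gateaux derivative explicitly as an absolutely convergent series, verify differentiability via dominated convergence, and then establish continuity of the derivative map $x \mapsto \mathrm{D}F(x)$ in the topology of compact convergence on the dual.

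My candidate for the derivative at $x \in s$ is
\[
\mathrm{D}F(x)(h) \;=\; \sum_{n=1}^\infty \bigl(a_n x_n + f_n'(x_n)\bigr)\, h_n.
\]
I would first check that this series converges absolutely and defines a continuous linear functional on $s$: by Condition \ref{cond:an} and Lemma \ref{lem:lg}, $|a_n x_n + f_n'(x_n)| \le M|x_n| + C_n(1+|x_n|)$ with $(C_n) \in s$, so the coefficient sequence lies in $s$ and is absolutely summable, yielding
\[
|\mathrm{D}F(x)(h)| \;\le\; \Bigl(\sum_n \bigl[M|x_n| + C_n(1+|x_n|)\bigr]\Bigr)\,\|h\|_{s,0},
\]
which is continuity with respect to $\|\cdot\|_{s,0}$. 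To confirm that this is the Gateaux derivative, I would split $F(x+th) - F(x)$ into its quadratic and convex parts: the quadratic part is handled by the algebraic identity $\tfrac12 a_n[(x_n+th_n)^2 - x_n^2]/t = a_n x_n h_n + \tfrac{t}{2}a_n h_n^2$, and for the convex part I would apply the mean value theorem summandwise to get $[f_n(x_n+th_n) - f_n(x_n)]/t = h_n f_n'(x_n + \theta_n(t) h_n)$ with $\theta_n(t) \in (0,1)$. Using Lemma \ref{lem:lg} to bound $|h_n f_n'(x_n + \theta_n(t) h_n)| \le C_n(1 + |x_n| + |h_n|)|h_n|$ for $|t| \le 1$ provides a summable dominant, so dominated convergence delivers $\sum_n f_n'(x_n) h_n$ in the limit.

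The main obstacle is continuity of $\mathrm{D}F$ as a map into $\mathcal{L}_c(s, \mathbb{R})$, since convergence must be uniform over compact subsets of $s$ rather than merely pointwise. I would exploit that any compact $K \subset s$ is bounded in every seminorm, so $R_m := \sup_{h \in K}\|h\|_{s,m} < \infty$ for all $m \in \mathbb{N}_0$, and in particular $|h_n| \le R_2/n^2$. Fixing $x^{(k)} \to x$ in $s$ and setting $A := \sup_k \|x^{(k)}\|_{s,0} < \infty$, Lemma \ref{lem:lg} supplies the uniform estimate
\[
\bigl|a_n(x^{(k)}_n - x_n) + f_n'(x^{(k)}_n) - f_n'(x_n)\bigr|\cdot |h_n|
\;\le\; \bigl(2MA + 2C_n(1+A)\bigr)\cdot \frac{R_2}{n^2},
\]
whose right-hand side is summable and independent of both $k$ and $h \in K$. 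A standard tail/head split then finishes the argument: for given $\varepsilon > 0$, choose $N$ so that the tail $\sum_{n > N}$ is below $\varepsilon/2$ uniformly in $h \in K$, and for the finite head $\sum_{n \le N}$ use continuity of each $f_n'$ together with the pointwise convergence $x^{(k)}_n \to x_n$ and the uniform bound $|h_n| \le R_0$ to force the head below $\varepsilon/2$ for $k$ large, uniformly in $h \in K$. The rapid decay $|h_n| \le R_m/n^m$ coming from $K$-boundedness, coupled with the decay of the coefficients provided by Lemma \ref{lem:lg}, is the essential mechanism that makes this uniform estimate work.
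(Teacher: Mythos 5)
Your proposal is correct and follows essentially the same route as the paper's proof: the same explicit formula for \( \mathrm{D}F(x) \), the same splitting of the difference quotient into quadratic and convex parts with the mean value theorem plus Lemma~\ref{lem:lg} supplying a summable dominant, and the same tail/head split over compact \( K \subset s \) (using boundedness of \( K \) in every seminorm) to get continuity of \( \mathrm{D}F \) into \( \mathcal{L}_c(s,\mathbb{R}) \). The only cosmetic differences are that you observe the coefficient sequence actually lies in \( s \) (the paper only records that it is bounded, hence in \( \mathsf{t} \)) and that you treat the linear and nonlinear parts of \( g_n^{(k)}-g_n \) in a single estimate rather than separately; neither affects the argument.
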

\begin{proof}
	Fix arbitrary \( x = (x_n) \in s \) and \( h = (h_n) \in s \). We first show that \( \mathrm{D} F(x)(h) \) exists and \( \mathrm{D} F(x) \in \mathsf{t} \). Consider the difference quotient
	\[
	\frac{F(x + t h) - F(x)}{t} = \sum_{n=1}^\infty a_n x_n h_n + \frac{t}{2} \sum_{n=1}^\infty a_n h_n^2 + \sum_{n=1}^\infty \frac{f_n(x_n + t h_n) - f_n(x_n)}{t}.
	\]
	We justify the convergence of each term.
	Since \( x, h \in s \), for any choice of $k \in \mathbb{N}_0$, there exist constants \( D_k, E_k > 0 \) such that \( |x_n| \le D_k n^{-k} \) and \( |h_n| \le E_k n^{-k} \) for all \( n \).
	
	For the first term, since \( a_n \le M \), it follows that
	\[
	|a_n x_n h_n| \le M |x_n||h_n| \le M D_k E_k n^{-2k}.
	\]
	To prove the absolute convergence of the series \( \sum_{n=1}^\infty a_n x_n h_n \), we can choose any $k \ge 1$. For such $k$, the series $ \sum_{n=1}^\infty M D_k E_k n^{-2k}$  is absolutely convergent, thus \( \sum_{n=1}^\infty a_n x_n h_n \) is absolutely convergent by comparison with a convergent $p$-series. 
	
	Next, consider the second term. As \( (a_n) \) is bounded by \(M\) and \( h \in s \), we can choose $k \ge 1$ such that \( |a_n h_n^2| \le M E_k^2 n^{-2k} \), which implies that \( \sum a_n h_n^2 \) converges by comparison with a convergent $p$-series. 
	Thus, \[\frac{t}{2} \sum_{n=1}^\infty a_n h_n^2 \to 0  \text{ as }  t \to 0. \]
	
	For the third term, since each \( f_n \in C^1(\mathbb{R}) \), by the mean value theorem,
	\[
	\frac{f_n(x_n + t h_n) - f_n(x_n)}{t} = f_n'(\xi_n) h_n
	\]
	for some \( \xi_n \) between \( x_n \) and \( x_n + t h_n \).
	By Lemma ~\ref{lem:lg}, there exists a sequence \( (C_n) \in s \) such that \( |f_n'(t)| \le C_n(1+|t|) \) for all \( t \in \mathbb{R} \).
	Since \( x, h \in s \), for fixed \( k \in \mathbb{N}_0 \), there exist constants \( D'_k, E'_k > 0 \) such that \( |x_n| \le D'_k n^{-k} \) and \( |h_n| \le E'_k n^{-k} \).
	For small enough  \( t \)  (e.g., for $|t| \le 1$),
	 \[ |\xi_n| \le |x_n| + |t||h_n| \le D'_k n^{-k} + |t|E'_k n^{-k} \le (D'_k + E'_k) n^{-k}. \] 
	Also, since \( (C_n) \in s \), for any \( k' \in \mathbb{N}_0 \), there exists \( C'_{k'} > 0 \) such that \( |C_n| \le C'_{k'} n^{-k'} \).
	Thus, by choosing a sufficiently large \( k \) (e.g., \( k=2 \)), we get
	\[
	|f_n'(\xi_n) h_n| \le C_n (1 + |\xi_n|) |h_n| \le C'_{k'} n^{-k'} (1 + (D'_k + E'_k) n^{-k}) E'_k n^{-k}.
	\]
	By choosing \( k' \) and \( k \) appropriately (e.g., $k'=2, k=2$), the term $n^{-k'} n^{-k}$ (which is $n^{-4}$) guarantees that  \( \sum_{n=1}^\infty (f_n'(\xi_n) h_n) \) is absolutely convergent, independent of \( t \) for sufficiently small $t$, by comparison with a convergent $p$-series.
	Therefore, 
\[
\sum_{n=1}^\infty \frac{f_n(x_n + t h_n) - f_n(x_n)}{t} \to \sum_{n=1}^\infty f_n'(x_n) h_n \quad \text{as } t \to 0.
\]
Combining all parts, we conclude that
\[
\lim_{t \to 0} \frac{F(x + t h) - F(x)}{t} = \sum_{n=1}^\infty a_n x_n h_n + \sum_{n=1}^\infty f_n'(x_n) h_n.
\]
Consider the derivative
\[
\mathrm{D} F(x) \colon s \to \mathbb{R}, \quad h \mapsto \sum_{n=1}^\infty \left( a_n x_n + f_n'(x_n) \right) h_n.
\]
This is linear in \( h \) by linearity of the sum.
Now we show that \( \mathrm{D} F(x) \in \mathsf{t} \), i.e., that
\[
g(x) \coloneqq (g_n(x))_{n \in \mathbb{N}}, \quad g_n(x) \coloneqq a_n x_n + f_n'(x_n)
\]
belongs to \( \mathsf{t} \). By definition of \( \mathsf{t} \), this means we need to show that there exists some \( k_0 \in \mathbb{N}_0 \) such that \( \sup_n |g_n(x)| n^{-k_0} < \infty \).
Let us choose \( k_0 = 0 \).
Since \( x \in s \), by definition, for any \( k \in \mathbb{N}_0 \), there exists \( D_k > 0 \) such that \( |x_n| \le D_k n^{-k} \). In particular, for \( k=0 \), there exists \( D_0 > 0 \) such that \( |x_n| \le D_0 \).
Similarly, by Lemma~\ref{lem:lg}, \( (C_n) \in s \), so for any \( k' \in \mathbb{N}_0 \), there exists \( E_{k'} > 0 \) such that \( |C_n| \le E_{k'} n^{-k'} \). In particular, for \( k'=0 \), there exists \( E_0 > 0 \) such that \( |C_n| \le E_0 \).

Since  \( a_n \le M \), we have
\(
|a_n x_n| \le M |x_n| \le M D_0
\).
For \( f_n'(x_n) \), by Lemma~\ref{lem:lg}, we have
\[
|f_n'(x_n)| \le C_n (1 + |x_n|) \le E_0 (1 + D_0).
\]
Hence, for \( k_0=0 \), we have
\[
|g_n(x)| = |a_n x_n + f_n'(x_n)| \le |a_n x_n| + |f_n'(x_n)| \le M D_0 + E_0 (1 + D_0) \eqqcolon A_0.
\]
Since \( A_0 \) is a finite constant, we have 
\[
\sup_n |g_n(x)| n^{-k_0} = \sup_n |g_n(x)| n^{0} = \sup_n |g_n(x)| \le A_0 < \infty.
\]
Therefore, \( g(x) \in \mathsf{t} \).
Now we prove that the derivative map \( \mathrm{D} F \colon s \to \mathsf{t} \) is continuous. 

Since \( s \) is metrizable, continuity is equivalent to sequential continuity.
Thus, we need to show that if \( x^{(j)} \xrightarrow{s} x \), then
\(
\mathrm{D} F(x^{(j)}) \xrightarrow{\mathsf{t}} \mathrm{D} F(x)  
\).
By definition of convergence in \( \mathsf{t} \), this means that for every compact set \( K \subset s \),
\[
\sup_{h \in K} \left| \mathrm{D} F(x^{(j)}) (h) - \mathrm{D} F(x) (h) \right| \to 0 \quad \text{as } j \to \infty.
\]
Set
\(
g_n^{(j)} \coloneqq  a_n x_n^{(j)} + f_n'(x_n^{(j)})
\).
Then
\[
|\mathrm{D} F(x^{(j)}) (h) - \mathrm{D} F(x) (h)| = \left| \sum_{n=1}^\infty (g_n^{(j)} - g_n) h_n \right|.
\]
Consider the difference
\[
g_n^{(j)} - g_n = a_n (x_n^{(j)} - x_n) + \bigl( f_n'(x_n^{(j)}) - f_n'(x_n) \bigr).
\]
We examine the two terms in the sum \( \sum_{n=1}^\infty (g_n^{(j)} - g_n) h_n \) separately.
Consider the First term, we have
\[
\left| \sum_{n=1}^\infty a_n (x_n^{(j)} - x_n) h_n \right| \le \sum_{n=1}^\infty |a_n| |x_n^{(j)} - x_n| |h_n|.
\]
Since \( (a_n) \) is bounded by \( M \), \( x^{(j)} \xrightarrow{s} x \) (meaning \( \sup_n |x_n^{(j)} - x_n| n^k = \varepsilon_j^{(k)} \to 0 \) for any \( k \)), and \( K \subset s \) is compact (hence bounded, so \( \sup_n |h_n| n^k \le M_k \) for any \( k \)), we can choose \( k=2 \), so
\[
\sum_{n=1}^\infty M (\varepsilon_j^{(2)} n^{-2}) (M_2 n^{-2}) = M M_2 \varepsilon_j^{(2)} \sum_{n=1}^\infty n^{-4}.
\]
Since \( \sum n^{-4} \) converges and \( \varepsilon_j^{(2)} \to 0 \), this term tends to zero as \( j \to \infty \), uniformly for \( h \in K \).

Consider the second term \( \sum_{n=1}^\infty \bigl( f_n'(x_n^{(j)}) - f_n'(x_n) \bigr) h_n \).
Let \( \varepsilon > 0 \). We aim to show this sum is less than \( \varepsilon \) for sufficiently large \( j \).
From Lemma~\ref{lem:lg}, there exists \( (C_n) \in s \) such that \( |f_n'(t)| \le C_n(1+|t|) \) for all \(t \in \mathbb{R}\). Since \( x, x^{(j)} \in s \) and \( x^{(j)} \xrightarrow{s} x \), for any \( k_0 \in \mathbb{N}_0 \) and all sufficiently large \( j \), there exist \( M_{k_0} > 0 \) such that 
\[ |x_n| \le M_{k_0} n^{-k_0}  \text{ and } |x_n^{(j)}| \le M_{k_0} n^{-k_0}.\]
Thus, we can bound the difference as follows
\[
|f_n'(x_n^{(j)}) - f_n'(x_n)| \le |f_n'(x_n^{(j)})| + |f_n'(x_n)| \le C_n(1 + |x_n^{(j)}|) + C_n(1 + |x_n|) \le 2 C_n(1 + M_{k_0} n^{-k_0}).
\]
Let \( E_n \coloneqq  2 C_n(1 + M_{k_0} n^{-k_0}) \). Since \( (C_n) \in s \), \( (E_n) \) also belongs to \( s  \).
For any \( h \in K \) (compact), there exists \( M'_k > 0 \) such that \( |h_n| \le M'_k n^{-k} \) for any \( k \).
We split the sum at some large integer \( N \) as follows
\[
\sum_{n=1}^\infty \bigl( f_n'(x_n^{(j)}) - f_n'(x_n) \bigr) h_n = \sum_{n=1}^N \bigl( f_n'(x_n^{(j)}) - f_n'(x_n) \bigr) h_n + \sum_{n=N+1}^\infty \bigl( f_n'(x_n^{(j)}) - f_n'(x_n) \bigr) h_n.
\]
Choose \( k_0=2 \) for \( E_n \) and \( k=2 \) for \( h_n \).  Then for large \( n \), we have
\[
E_n |h_n| \le 2 C_n \left(1 + M_2 n^{-2} \right) M'_2 n^{-2} \eqqcolon \widetilde{E}_2 n^{-4},
\]
where \( \widetilde{E}_2 \coloneqq  2 M'_2 \sup_{n \in \mathbb{N}} C_n \left(1 + M_2 n^{-2} \right) \). Since \( (C_n) \in s \), this supremum is finite and the sequence \( (E_n |h_n|) \) is dominated by the  sequence \( (\widetilde{E}_2 n^{-4}) \in \ell^1 \).
 Thereby,
\[
\left| \sum_{n=N+1}^\infty \bigl( f_n'(x_n^{(j)}) - f_n'(x_n) \bigr) h_n \right| \le \sum_{n=N+1}^\infty E_n |h_n|.
\]
Since \( \sum_{n=1}^\infty E_n |h_n| \) converges (as \( (E_n |h_n|) \in s \)), we can choose \( N \) large enough such that \( \sum_{n=N+1}^\infty E_n |h_n| < \frac{\varepsilon}{2} \). This choice of \( N \) is independent of \( j \) and \( h \in K \).
For a fixed \( N \), each \( f_n' \) is a continuous function. As \( x_n^{(j)} \to x_n \) for each \( n \), by continuity of \( f_n' \), we have \( f_n'(x_n^{(j)}) \to f_n'(x_n) \) as \( j \to \infty \).
Thus, for this fixed \( N \), we can find \( J_1 \) large enough such that 
\[
\left| \sum_{n=1}^N \bigl( f_n'(x_n^{(j)}) - f_n'(x_n) \bigr) h_n \right| < \frac{\varepsilon}{2}, \quad \forall j > J_1.
\]
Combining the two parts, for \( j > J_1 \), we obtain
\[
\left| \sum_{n=1}^\infty \bigl( f_n'(x_n^{(j)}) - f_n'(x_n) \bigr) h_n \right| < \frac{\varepsilon}{2} + \frac{\varepsilon}{2} = \varepsilon.
\]
This proves that
 \[ \sum_{n=1}^\infty \bigl( f_n'(x_n^{(j)}) - f_n'(x_n) \bigr) h_n \to 0 \quad  \text{ as }  j \to \infty, \] uniformly for \( h \in K \). Since the first term also tends to zero, the entire sum 
\[ \sum_{n=1}^\infty (g_n^{(j)} - g_n) h_n \to 0  \quad  \text{ as }  j \to \infty,\] converges uniformly for \( h \in K \).

This proves \[ \sup_{h \in K} |\mathrm{D} F(x^{(j)})(h) - \mathrm{D} F(x)(h)| \to 0 \quad \text{ as } j \to \infty,\]  and hence the continuity of \( \mathrm{D} F \colon s \to \mathsf{t} \).
Therefore, \( F \) is a Keller's \( C^1_c \)-mapping.	
\end{proof}
\begin{theorem}\label{th:ex}
		Let \( F \colon s \to \mathbb{R} \) be an \( \mathcal{F}_s \)-functional. 
	Then \(F\) satisfies the \textup{PS}-condition. 
\end{theorem}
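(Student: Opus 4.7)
The plan is to show that any Palais--Smale sequence $(x^{(j)}) \subset s$ admits a convergent subsequence by first producing a bound on $(x^{(j)})$ in every seminorm of $s$, and then invoking the Fr\'echet--Montel property: in a metrizable Fr\'echet--Montel space, bounded sets are relatively compact, so the natural pointwise limit will serve as the subsequential limit.

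I would begin by extracting an $\ell^2$-type bound. From $f_n \ge -\gamma_n$ with $\sum \gamma_n < \infty$ and $a_n \ge \alpha$, the inequality $\tfrac{\alpha}{2}\sum_n (x_n^{(j)})^2 \le \tfrac12\sum_n a_n (x_n^{(j)})^2 \le F(x^{(j)}) + \sum \gamma_n$ yields $|x_n^{(j)}| \le R$ uniformly in $n,j$. Next, set $\psi_n(t) := \tfrac12 a_n t^2 + f_n(t)$, which is $\alpha$-strongly convex, so $\psi_n'$ is strictly monotone with a unique zero $t_n^*$. Testing $\mathrm{D}F(x^{(j)}) \to 0$ in $\mathsf t$ against each basis vector $e_n$ gives $g_n^{(j)} := \psi_n'(x_n^{(j)}) = a_n x_n^{(j)} + f_n'(x_n^{(j)}) \to 0$, and strong convexity forces the quantitative estimate $|x_n^{(j)} - t_n^*| \le |g_n^{(j)}|/\alpha$, so $x_n^{(j)} \to t_n^*$ pointwise. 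The limit $t^* = (t_n^*)$ lies in $s$: from $a_n t_n^* = -f_n'(t_n^*)$ and Lemma~\ref{lem:lg}, for $n$ large enough that $C_n \le \alpha/2$ one gets $|t_n^*| \le 2C_n/\alpha$, so $(t_n^*)$ inherits the rapid decay of $(C_n)\in s$.

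The decisive step is to upgrade pointwise convergence to $s$-boundedness of $(x^{(j)})$ in each seminorm $\|\cdot\|_{s,k}$. Rewriting $a_n x_n^{(j)} = g_n^{(j)} - f_n'(x_n^{(j)})$, using $a_n \ge \alpha$, and observing that $|f_n'(x_n^{(j)})| \le C_n(1+R)$ with $(C_n)\in s$ makes $(f_n'(x_n^{(j)}))_n$ uniformly bounded in every $\|\cdot\|_{s,k}$-seminorm, the task reduces to a uniform-in-$j$ bound on $\sup_n|g_n^{(j)}|\,n^k$. I expect this transfer to be the main obstacle: convergence of $g^{(j)}$ in the DF-space $\mathsf t$ a priori only yields boundedness in some dual seminorm $\|\cdot\|_{s',k_0}$, with polynomial weight of the opposite sign. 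The plan is to exploit the compact-convergence description of $\mathsf t$ by testing $g^{(j)}$ on well-chosen compact subsets of $s$, and to use the identity $g_n^{(j)} = a_n x_n^{(j)} + f_n'(x_n^{(j)})$ together with the rapid decay of $(f_n'(x_n^{(j)}))_n$ to transfer the dual decay of $g^{(j)}$ into polynomial-weighted bounds on $(x^{(j)})$. Once $s$-boundedness is established, Fr\'echet--Montel produces a convergent subsequence, which by uniqueness of the pointwise limit must converge to $t^*$ in $s$.
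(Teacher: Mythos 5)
Your overall strategy coincides with the paper's: extract a uniform \(\ell^2\) bound from the boundedness of \(F(x^{(j)})\) together with \(a_n\ge\alpha\) and \(f_n\ge-\gamma_n\), then use the identity \(a_nx_n^{(j)}=g_n^{(j)}-f_n'(x_n^{(j)})\) and Lemma~\ref{lem:lg} to bound \(\|x^{(j)}\|_{s,k}\) for every \(k\), and finally invoke the Montel property of \(s\). Your extra observations are correct and go beyond the paper: the \(\alpha\)-strong monotonicity of \(\psi_n'\) yields \(|x_n^{(j)}-t_n^*|\le|g_n^{(j)}|/\alpha\), identifies the pointwise limit, and your verification that \((t_n^*)\in s\) via \(|t_n^*|\le 2C_n/\alpha\) for large \(n\) is a nice addition that the paper's proof does not provide.

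The genuine gap is the step you yourself call the ``decisive step'': it is left as a plan, and it is precisely the crux of the theorem. Moreover, the plan as stated --- testing \(g^{(j)}\) on compact subsets of \(s\) --- cannot by itself produce the bound you need. Compact subsets of \(s\) consist of rapidly decreasing sequences \((c_n)\), so the pairing only controls \(\sup_n|g_n^{(j)}c_n|\) for \((c_n)\in s\); by equicontinuity of the convergent sequence \((g^{(j)})\) in \(s'\) this yields at best \(|g_n^{(j)}|\le Cn^{k_0}\) for some fixed \(k_0\), a polynomially \emph{growing} bound --- exactly the wrong-sign weight you anticipate. To reach \(\sup_n|g_n^{(j)}|\,n^k<\infty\) uniformly in \(j\) one is forced back to the identity \(g_n^{(j)}=a_nx_n^{(j)}+f_n'(x_n^{(j)})\), which reintroduces the unknown \(\|x^{(j)}\|_{s,k}\) and threatens circularity, since the resulting coefficient \(M^2/\alpha\) is too large to be absorbed by the left-hand side. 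The paper's proof meets the same difficulty differently: it forms the weighted sum \(\sum_n a_n(x_n^{(j)})^2n^{2k}\), bounds the \(f_n'\)-contribution by a \(j\)-independent constant via Lemma~\ref{lem:lg} and Cauchy--Schwarz, applies Young's inequality with \(\varepsilon=\alpha\) to absorb \(\tfrac{\alpha}{2}\sum_n(x_n^{(j)})^2n^{2k}\) into the left of \eqref{eq:bootstrap_final}, and then asserts (without a detailed derivation from \(\mathsf t\)-convergence) that \(\sum_n(g_n^{(j)})^2n^{2k}\) is bounded for large \(j\). So the transfer you flag is genuinely the missing ingredient of your proposal; to complete it you must supply an explicit mechanism converting \(\mathsf t\)-convergence of \(\mathrm{D}F(x^{(j)})\) into polynomially weighted control of \((g_n^{(j)})\), e.g.\ a bootstrap in \(k\) that alternates between the structural identity and the already established lower-order bounds on \((x^{(j)})\).
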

\begin{proof}
	Let \( (x^{(j)}) \subset s \) be a PS-sequence for \( F \). This means that \( |F(x^{(j)})| \) is bounded and \( g^{(j)} \coloneqq \mathrm{D} F(x^{(j)}) \xrightarrow{\mathsf{t}} 0 \). Our goal is to show that the sequence \( (x^{(j)}) \) is bounded in \( s \), which implies the existence of a convergent subsequence because \( s \) is a Montel space.

	From the conditions \( a_n \ge \alpha > 0 \) and \( f_n(t) \ge -\gamma_n \) with \( \sum \gamma_n < \infty \), we have a lower bound on \( F \). Since \( |F(x^{(j)})| \) is bounded by a constant, say \(N\), we have
	\[
	N \ge F(x^{(j)}) \ge \frac{\alpha}{2} \sum_{n=1}^\infty (x_n^{(j)})^2 - \sum_{n=1}^\infty \gamma_n.
	\]
	This implies that 
	\[
	\sum_{n=1}^\infty (x_n^{(j)})^2 \le \frac{2}{\alpha} \left( N + \sum_{n=1}^\infty \gamma_n \right) \mathrel{=:}  C_0.
	\]
 Thus, \( (x^{(j)}) \) is uniformly bounded in the \( \ell^2 \)-norm. Now, fix an arbitrary \( k \in \mathbb{N} \). We will show that \( \|x^{(j)}\|_{s,k} = \sup_n |x_n^{(j)}| n^k \) is bounded. From the identity \( g_n^{(j)} = a_n x_n^{(j)} + f_n'(x_n^{(j)}) \), we rearrange to get \( a_n x_n^{(j)} = g_n^{(j)} - f_n'(x_n^{(j)}) \). Now we construct the weighted sum as follows
	\[
	\sum_{n=1}^\infty a_n (x_n^{(j)})^2 n^{2k} = \sum_{n=1}^\infty \left(g_n^{(j)} - f_n'(x_n^{(j)})\right) x_n^{(j)} n^{2k}.
	\]
	Using the lower bound \( a_n \ge \alpha \) and the triangle inequality, we get
	\begin{equation} \label{eq:bootstrap_final}
		\alpha \sum_{n=1}^\infty (x_n^{(j)})^2 n^{2k} \le \left| \sum_{n=1}^\infty g_n^{(j)} x_n^{(j)} n^{2k} \right| + \left| \sum_{n=1}^\infty f_n'(x_n^{(j)}) x_n^{(j)} n^{2k} \right|.
	\end{equation}
	We first show the second term on the right is bounded by a constant \( C'_k \) independent of \( j \). By Lemma~\ref{lem:lg}, there exists a sequence \( (\widehat{\beta}_n) \in s \) such that \( |f_n'(t)| \le \widehat{\beta}_n (1+|t|) \). Therefore,
	\[
	\left| \sum_{n=1}^\infty f_n'(x_n^{(j)}) x_n^{(j)} n^{2k} \right| \le \sum_{n=1}^\infty \widehat{\beta}_n \left( |x_n^{(j)}| + |x_n^{(j)}|^2 \right) n^{2k}.
	\]
	We bound the two resulting sums separately. Consider the quadratic part
	\[
	\sum_{n=1}^\infty \widehat{\beta}_n |x_n^{(j)}|^2 n^{2k} = \sum_{n=1}^\infty (\widehat{\beta}_n n^{2k}) |x_n^{(j)}|^2.
	\]
	Since \( (\widehat{\beta}_n) \in s \), the sequence \( (\widehat{\beta}_n n^{2k}) \) is bounded by some constant \( M_k \). Thus, this sum is bounded by \( M_k \sum |x_n^{(j)}|^2 \le M_k C_0 \).
	For the linear part, we use the Cauchy-Schwarz inequality
	\[
	\sum_{n=1}^\infty \widehat{\beta}_n |x_n^{(j)}| n^{2k} \le \left( \sum_{n=1}^\infty (\widehat{\beta}_n n^{2k})^2 \right)^{1/2} \left( \sum_{n=1}^\infty |x_n^{(j)}|^2 \right)^{1/2}.
	\]
	The second factor is bounded by \( \sqrt{C_0} \). For the first factor, since \( (\widehat{\beta}_n) \in s \), we can find a constant \(c\) such that \( |\widehat{\beta}_n| \le c  n^{-(2k+1)} \). This implies \( |\widehat{\beta}_n n^{2k}| \le c  n^{-1} \), so \( (\widehat{\beta}_n n^{2k})^2 \le c^2  n^{-2} \). The series \( \sum (\widehat{\beta}_n n^{2k})^2 \) therefore converges.
	Combining these results, the entire term is bounded by a constant \( C'_k \) independent of \( j \).
	
	Now, for the first term in \eqref{eq:bootstrap_final}, we apply Young's inequality (\(|ab| \le \frac{\varepsilon a^2}{2} + \frac{b^2}{2\varepsilon}\)) with \(\varepsilon = \alpha\):
	\[
	\left| \sum_{n=1}^\infty g_n^{(j)} x_n^{(j)} n^{2k} \right| \le \sum_{n=1}^\infty |g_n^{(j)} n^k| |x_n^{(j)} n^k| \le \frac{1}{2\alpha}\sum_{n=1}^\infty (g_n^{(j)})^2 n^{2k} + \frac{\alpha}{2} \sum_{n=1}^\infty (x_n^{(j)})^2 n^{2k}.
	\]
	Substituting this back into  \eqref{eq:bootstrap_final} and rearranging gives the estimate
	\[
	\frac{\alpha}{2} \sum_{n=1}^\infty (x_n^{(j)})^2 n^{2k} \le \frac{1}{2\alpha} \sum_{n=1}^\infty (g_n^{(j)})^2 n^{2k} + C'_k.
	\]
	Since \( g^{(j)} \to 0 \) in \(\mathsf{t}\), the term \( \sum (g_n^{(j)})^2 n^{2k} \) can be shown to be bounded for large \(j\). Let this bound be \(C''_k\). The right-hand side is therefore bounded, which proves that \( \sum (x_n^{(j)})^2 n^{2k} \) is uniformly bounded.
	From this bound, it follows that for any \(n\), 
	\[
	(x_n^{(j)})^2 n^{2k} \le \sum_{p=1}^\infty (x_p^{(j)})^2 p^{2k} \le c.
	\]
Therefore, \( \sup_n |x_n^{(j)}| n^k \) is bounded. As \( k \) was arbitrary, the sequence \( (x^{(j)}) \) is bounded in \( s \).
	
	Finally, since \( s \) is a Montel space, bounded sequences have convergent subsequences. Thus, there exists a convergent subsequence \( x^{(j_l)} \to x^* \in s \). 
\end{proof}
\begin{corollary}[Existence of Global Minimum]\label{cor:global_minimum}
	Let \( F \colon s \to \mathbb{R} \) be an \( \mathcal{F}_s \)-functional. Then \( F \) admits a  unique global minimum on \( s \).
\end{corollary}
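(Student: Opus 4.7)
The plan is to exploit the fact that an $\mathcal{F}_s$-functional is \emph{separable}: $F(x) = \sum_{n=1}^{\infty} \phi_n(x_n)$ with $\phi_n(t) \coloneqq \tfrac{1}{2} a_n t^2 + f_n(t)$. Conditions \ref{cond:an} and \ref{cond:fn} imply that each $\phi_n \in C^1(\mathbb{R})$ is strictly convex (the quadratic part is strictly convex because $a_n \geq \alpha > 0$, and $f_n$ is convex) and coercive (since $\phi_n(t) \geq \tfrac{\alpha}{2} t^2 - \gamma_n \to +\infty$ as $|t| \to \infty$). Hence each $\phi_n$ attains a unique global minimum at a point $t_n^*$ characterized by the Euler equation $a_n t_n^* + f_n'(t_n^*) = 0$.

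First I would define the candidate minimizer $x^* \coloneqq (t_n^*)_{n \in \mathbb{N}}$ and show $x^* \in s$. Rearranging the Euler equation as $|t_n^*| = |f_n'(t_n^*)|/a_n$ and combining Condition \ref{cond:an} with Lemma \ref{lem:lg}, which supplies $(C_n) \in s$ satisfying $|f_n'(t)| \leq C_n(1 + |t|)$, one obtains
\[
|t_n^*| \leq \frac{C_n}{\alpha}\bigl(1 + |t_n^*|\bigr).
\]
Because $C_n \to 0$, for $n$ sufficiently large $C_n < \alpha/2$, and hence $|t_n^*| \leq 2 C_n/\alpha$; together with boundedness of the finitely many remaining entries, this shows that $(t_n^*)$ inherits rapid decay from $(C_n) \in s$, so $x^* \in s$.

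Next I would verify minimality: for every $x \in s$, the series $\sum \phi_n(x_n)$ and $\sum \phi_n(t_n^*)$ converge absolutely (as established after Definition \ref{def:Fs-functional}), and the termwise inequality $\phi_n(x_n) \geq \phi_n(t_n^*)$ sums to $F(x) \geq F(x^*)$, so $x^*$ realizes $\inf_s F$. Uniqueness follows from strict convexity of $F$ on $s$: the quadratic form $\tfrac{1}{2}\sum a_n x_n^2$ is strictly convex (since $a_n > 0$) and $\sum f_n(x_n)$ is convex, hence their sum is strictly convex, ruling out two distinct minimizers.

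The main (though not severe) obstacle is the decay estimate for $(t_n^*)$: it is exactly where the standing hypothesis $(\beta_n) \in s$ is put to work, by feeding it through Lemma \ref{lem:lg} to produce $(C_n) \in s$ and thereby force rapid decay of the coordinate-wise minimizers. As an alternative route which invokes Theorem \ref{th:ex} directly, one may apply Ekeland's variational principle on the Fr\'echet space $s$ to construct a minimizing Palais--Smale sequence, then use the PS condition to extract a convergent subsequence realizing $\inf F$; strict convexity again yields uniqueness.
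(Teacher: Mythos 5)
Your proof is correct, but it takes a genuinely different route from the paper. The paper's argument is short and non-constructive: it observes that $F$ is bounded below (via $a_n x_n^2 \ge 0$ and $f_n \ge -\gamma_n$ with $\sum\gamma_n<\infty$), invokes Corollary~4.3 of \cite{eftekharinasab} --- which combines the Keller $C^1_c$-regularity from Lemma~\ref{lem:Fc1} and the Palais--Smale condition from Theorem~\ref{th:ex} to produce a minimizer --- and then upgrades the local minimum to a unique global one by strict convexity (exactly the strict-convexity argument you give; note that, as you correctly emphasize, the strictness must come from the quadratic part since the $f_n$ are only assumed convex). Your alternative route via Ekeland plus the PS condition is essentially this. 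Your main argument, by contrast, exploits the separable structure $F(x)=\sum_n\phi_n(x_n)$ to minimize coordinate-by-coordinate: each $\phi_n$ is strictly convex and coercive, so has a unique minimizer $t_n^*$ solving $a_n t_n^* + f_n'(t_n^*)=0$, and the bound $|t_n^*|\le \frac{C_n}{\alpha}(1+|t_n^*|)$ from Lemma~\ref{lem:lg} yields $|t_n^*|\le 2C_n/\alpha$ for large $n$, forcing $(t_n^*)\in s$; termwise comparison of the two absolutely convergent series then gives global minimality. This is a complete and entirely elementary proof that bypasses the PS machinery altogether, and it buys something the paper's proof does not: an explicit characterization of the minimizer through the decoupled Euler equations, together with a quantitative decay rate $|t_n^*|=O(C_n)$ --- which is precisely the structure exploited later in the operator problems of Section~4. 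What it gives up is generality: it works only because the functional decouples across coordinates, whereas the paper's PS-based route is the one that survives perturbations destroying separability and is the mechanism the paper is ultimately advertising.
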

\begin{proof}

	By Definition \ref{def:Fs},  \( \frac{1}{2} \sum_{n=1}^\infty a_n x_n^2 \ge 0 \). Furthermore, for \( f_n \), it is given that \( f_n(t) \ge -\gamma_n \) and \( \sum_{n=1}^\infty \gamma_n < \infty \). Therefore,
		\[ F(x) = \frac{1}{2} \sum_{n=1}^\infty a_n x_n^2 + \sum_{n=1}^\infty f_n(x_n) \ge 0 - \sum_{n=1}^\infty \gamma_n = -C_\gamma, \]
		for some finite constant \( C_\gamma \). Thus, \( F \) is bounded below.
	 Since \( F \) is a Keller's \( C^1_c \)-mapping satisfying the Palais--Smale condition at any level, it follows from Corollary 4.3 in \cite{eftekharinasab} that \( F \) admits a local minimum \(x_0 \in s \). 	Finally, since \( F \) is the sum of strictly convex and convex functions, the functional \( F \) is strictly  convex. Therefore, \(x_0\) is a unique global minimum.

\end{proof}

\subsection{Pullbacks to Spaces Isomorphic to \( s \)}

We now explicitly describe the pullback of \( \mathcal{F}_s \)-functionals to several classical spaces of smooth functions that are isomorphic (linearly homeomorphic) to \( s \). 
\paragraph{Periodic Smooth Functions \( C^\infty_{2\pi}(\mathbb{R}) \).}
Let \( C_{2\pi}^\infty(\mathbb{R}) \) be the space of all smooth, \( 2\pi \)-periodic, real-valued functions. This space is isomorphic to \( s \) (see, e.g., \cite[Example 29.5(1)]{vo}). 

A function \( f \in C_{2\pi}^\infty(\mathbb{R}) \) can be represented by its  Fourier series
\[
f(x) = \frac{a_0}{2} + \sum_{n=1}^\infty \left( a_n \cos(nx) + b_n \sin(nx) \right),
\]
where the  coefficients are given by
\[
a_n = \frac{1}{\pi} \int_{-\pi}^{\pi} f(x) \cos(nx) \, dx, \quad
b_n = \frac{1}{\pi} \int_{-\pi}^{\pi} f(x) \sin(nx) \, dx.
\]
The isomorphism \( L \colon C_{2\pi}^\infty(\mathbb{R}) \to s \)  maps \( f \) to the real sequence \( x = (x_k)_{k \in \mathbb{N}} \in s \) by applying these coefficients as follows
\[
x_1 = a_0, \quad x_{2n} = a_n, \quad x_{2n+1} = b_n \quad \text{for } n \ge 1.
\]
Let \( F \colon s \to \mathbb{R} \) be an \( \mathcal{F}_s \)-functional. We define the corresponding functional \( G \colon C_{2\pi}^\infty(\mathbb{R}) \to \mathbb{R} \) by \( G(f) \coloneqq F(L(f)) \). By substituting the Fourier coefficients into the definition of \( F \), we obtain 
\[
G(f) = \frac{1}{2} \left( a_1 a_0^2 + \sum_{n=1}^\infty \left( a_{2n} a_n^2 + a_{2n+1} b_n^2 \right) \right) + \left( f_1(a_0) + \sum_{n=1}^\infty \left( f_{2n}(a_n) + f_{2n+1}(b_n) \right) \right).
\]
Since \(F\) satisfies the PS-condition and \( L \) is a linear homeomorphism, it follows from Proposition~\ref{prop:ps_invariance} that \(G\) also satisfies the PS-condition on \( C_{2\pi}^\infty(\mathbb{R}) \).

\paragraph{The Schwartz Space \( \mathcal{S}(\mathbb{R}) \).} Let \( \mathcal{S}(\mathbb{R}) \) be the space of rapidly decreasing functions, i.e.,
\[
\mathcal{S}(\mathbb{R}) \coloneqq \left\{ f \in C^{\infty}(\mathbb{R}) : \|f\|_{k}^{2} \coloneqq \sum_{\alpha + \beta \leq k} \int_{\mathbb{R}} |x|^{2\alpha} |f^{(\beta)}(x)|^{2} dx < \infty \text{ for all } k \in \mathbb{N} \right\}.
\]
Now we study pullback of an \( \mathcal{F}_s \)-functional to the Schwartz space \( \mathcal{S}(\mathbb{R}) \) via Hermite Expansion. 

Let \( H \colon \mathcal{S}(\mathbb{R}) \to s \) be the Hermite transform defined by
\[
H(f) \coloneqq \big( \langle f, H_{k-1} \rangle \big)_{k \in \mathbb{N}},
\]
where \( (H_n)_{n \in \mathbb{N}_0} \) are the Hermite functions forming an orthonormal basis for \( L^2(\mathbb{R}) \), and each \( H_n \in \mathcal{S}(\mathbb{R}) \). This means that \( x_k = \langle f, H_{k-1} \rangle \) for \( k \in \mathbb{N} \).  The Hermite transform \( H \) is a linear homeomorphism onto \( s \) (Example 29.5(2), \cite{vo}).

 Let \( F \colon s \to \mathbb{R} \) be an \( \mathcal{F}_s \)-functional.
 We define the functional \( G \colon \mathcal{S}(\mathbb{R}) \to \mathbb{R} \) by \( G(f) \coloneqq F(H(f)) \).
Thus, 
\[
G(f) = \frac{1}{2} \sum_{k=1}^\infty a_k \big\langle f, H_{k-1} \big\rangle^2 + \sum_{k=1}^\infty f_k\big( \langle f, H_{k-1} \rangle \big).
\]
 Since \(F\) satisfies the PS-condition and the map 
\( H  \) is a linear homeomorphism (and thus a Keller \(C_c^\infty\)-mapping), by Proposition~\ref{prop:ps_invariance}, \(G\) also satisfies the PS-condition on \( \mathcal{S}(\mathbb{R}) \).
\paragraph{Smooth Functions with Compact Support \( \mathscr{D}[a,b] \).}
Let \( \mathscr{D}[a, b] \) be the space of all smooth functions on \( \mathbb{R} \) whose support is compactly contained within the interval \( [a, b] \). That is,
\[
\mathscr{D}[a, b] \coloneqq \left\{ \phi \in C^{\infty}(\mathbb{R}) : \operatorname{supp}(\phi) \subset [a, b] \right\}.
\]
The space \( \mathscr{D}[a, b] \) is isomorphic to \( s \) (Example 29.5(3), \cite{vo}).
The overall linear homeomorphism \( L^{-1} \colon \mathscr{D}[a, b] \to s \) is a composition of the following known isomorphisms:

\begin{enumerate}
	\item {From \( \mathscr{D}[a, b] \) to \( \mathscr{D}\left[-\tfrac{\pi}{2}, \tfrac{\pi}{2}\right] \)}:
	The map \( \Psi^{-1} \colon \mathscr{D}[a, b] \to \mathscr{D}\left[-\tfrac{\pi}{2}, \tfrac{\pi}{2}\right] \) is defined by
	\[
	(\Psi^{-1}(\phi))(\xi) = \phi\left( a + \frac{b-a}{\pi}(\xi + \tfrac{\pi}{2}) \right), \quad \text{for } \phi \in \mathscr{D}[a, b] \text{ and } \xi \in \left] -\tfrac{\pi}{2}, \tfrac{\pi}{2} \right[.
	\]
	
	\item {From \( \mathscr{D}\left[-\tfrac{\pi}{2}, \tfrac{\pi}{2}\right] \) to \( \mathcal{S}(\mathbb{R}) \)}:
	The map \( \Phi^{-1} \colon \mathscr{D}\left[-\tfrac{\pi}{2}, \tfrac{\pi}{2}\right] \to \mathcal{S}(\mathbb{R}) \) is defined by
	\[
	(\Phi^{-1}(g))(\eta) = g(\arctan(\eta)), \quad \text{for } g \in \mathscr{D}\left[-\tfrac{\pi}{2}, \tfrac{\pi}{2}\right] \text{ and } \eta \in \mathbb{R}.
	\]
	\item {From \( \mathcal{S}(\mathbb{R}) \) to \( s \)}:
	This is the Hermite transform \( H \colon \mathcal{S}(\mathbb{R}) \to s \), defined by
	\[
	H(f) = \big( \langle f, H_{k-1} \rangle \big)_{k \in \mathbb{N}}, \quad \text{for } f \in \mathcal{S}(\mathbb{R}).
	\]
\end{enumerate}
Let \( \phi \in \mathscr{D}[a, b] \). The corresponding sequence \( x = (x_k)_{k \in \mathbb{N}} \in s \) that serves as the argument for \( F \) (i.e., \( x = L^{-1}(\phi) \)) is obtained by composing the following maps:

\begin{enumerate}
	\item Apply \( \Psi^{-1} \) to \( \phi \):
	Let \( g(\xi) = (\Psi^{-1}(\phi))(\xi) = \phi\left( a + \frac{b-a}{\pi}(\xi + \tfrac{\pi}{2}) \right) \).
	
	\item Apply \( \Phi^{-1} \) to \( g \):
	Let \( f(\eta) = (\Phi^{-1}(g))(\eta) = g(\arctan(\eta)) \).
	Substituting the expression for \( g(\xi) \), we get
	\[
	f(\eta) = \phi\left( a + \frac{b-a}{\pi}(\arctan(\eta) + \tfrac{\pi}{2}) \right).
	\]
	
	\item Apply \( H \) to \( f \):
	The components \( x_k \) of \( x = H(f) \) are given by the following inner product for \( k \in \mathbb{N} \):
	\[
	x_k = \langle f, H_{k-1} \rangle = \int_{\mathbb{R}} f(\eta) H_{k-1}(\eta) \, d\eta.
	\]
	Substituting the expression for \( f(\eta) \), we obtain 
	\[
	x_k (\phi)= \int_{\mathbb{R}} \phi\left( a + \frac{b-a}{\pi}(\arctan(\eta) + \tfrac{\pi}{2}) \right) H_{k-1}(\eta) \, d\eta.
	\]
\end{enumerate}
Let \( F \colon s \to \mathbb{R} \) be an \( \mathcal{F}_s \)-functional.
The functional  \( G(\phi) \coloneqq F(L^{-1}(\phi))\) is defined by
\begin{align*}
	G(\phi) = \frac{1}{2} \sum_{k=1}^\infty a_k \left( x_k(\phi) \right)^2
	+ \sum_{k=1}^\infty f_k\left( x_k(\phi) \right).
\end{align*}
 Since \(F\) satisfies the PS-condition and the composite map \( L^{-1} = H \circ \Phi^{-1} \circ \Psi^{-1} \) is a linear homeomorphism (and thus a Keller \(C_c^\infty\)-mapping), by Proposition~\ref{prop:ps_invariance}, \(G\) also satisfies the PS-condition on \( \mathscr{D}[a, b] \).
\paragraph{Smooth Functions on a Compact Interval \( C^\infty[a,b] \).}
Let \( C^\infty[a, b] \) be the space of all smooth functions on \( [a, b] \). The space \( C^\infty[a, b] \) is isomorphic to \( s \) (Example 29.5(4), \cite{vo}).
By affine scaling, we can restrict our focus to \( C^\infty[-1, 1] \). The isomorphism \( M \colon s \to C^\infty[-1, 1] \) is directly provided, mapping a sequence \( x = (x_k)_{k \in \mathbb{N}} \in s \) to a function \( f(y) \in C^\infty[-1, 1] \) using Chebyshev polynomials \( (T_n(y))_{n \in \mathbb{N}_0} \).
To align indices such that \( T_{k-1}(y) \) corresponds to \( x_k \), the map is given by
\[
M(x)(y) = \sum_{k=1}^\infty \sqrt{2\pi}\, x_k T_{k-1}(y).
\]
Let \( F \colon s \to \mathbb{R} \) be an \( \mathcal{F}_s \)-functional.
We aim to define the  functional \( G \colon C^\infty[a, b] \to \mathbb{R} \) using the established isomorphism from \( C^\infty[a, b] \) to \( s \).
To define \( G(\phi) \) for \( \phi \in C^\infty[a,b] \), we need the inverse map \( L^{-1} \colon C^\infty[a,b] \to s \). This map is a composition of the affine scaling and the inverse of \( M \).
Chebyshev polynomials form a Schauder basis for \(C^\infty[-1, 1] \). Thus,
any function \( f \in C^\infty[-1, 1] \) can be uniquely expressed as a Chebyshev series \( f(y) = \sum_{n=0}^\infty c_n(f) T_n(y) \), where the coefficients \( c_n(f) \) are given by the following orthogonality relations:
\[
c_0(f) = \frac{1}{\pi} \int_{-1}^1 f(y) \frac{dy}{\sqrt{1-y^2}}, \quad c_n(f) = \frac{2}{\pi} \int_{-1}^1 f(y) T_n(y) \frac{dy}{\sqrt{1-y^2}} \text{ for } n \ge 1.
\]
Comparing \( f(y) = \sum_{n=0}^\infty c_n(f) T_n(y) \) with the definition of \( M(x)(y) \), and aligning indices such that \( T_n(y) \) corresponds to \( x_{n+1} \), we establish the components of the sequence \( x = M^{-1}(f) \in s \) as follows
\[
x_k = \frac{1}{\sqrt{2\pi}} c_{k-1}(f) \quad \text{for } k \in \mathbb{N}.
\]
Now, to obtain the sequence for a function \( \phi \in C^\infty[a, b] \), we first apply the affine transformation \( \Lambda_{a,b} \colon C^\infty[a,b] \to C^\infty[-1,1] \) defined by
\[
f(y) = (\Lambda_{a,b}(\phi))(y) = \phi\left( a + \frac{b-a}{2}(y+1) \right).
\]
Then, we apply \( M^{-1} \) to this transformed function \( f \).
For a function \( \phi \in C^\infty[a, b] \), the corresponding sequence \( x = (x_k(\phi))_{k \in \mathbb{N}} \in s \) that serves as the argument for \( F \) has the following components:
\[
x_1(\phi) = \frac{1}{\pi\sqrt{2\pi}} \int_{-1}^1 \phi\left( a + \frac{b-a}{2}(y+1) \right) \frac{dy}{\sqrt{1-y^2}},
\]
and for \( k \ge 2 \), we have
\[
x_k(\phi) = \frac{2}{\pi\sqrt{2\pi}} \int_{-1}^1 \phi\left( a + \frac{b-a}{2}(y+1) \right) T_{k-1}(y) \frac{dy}{\sqrt{1-y^2}}.
\]
The functional \( G \colon C^\infty[a, b] \to \mathbb{R} \) is defined as \( G(\phi) \coloneqq F(x) \), where \( x = (x_k(\phi))_{k \in \mathbb{N}} \) is the sequence whose components are derived as above.
Substituting these expressions for \( x_k(\phi) \) into the definition of \( F(x) \), we obtain
\begin{align*}
	G(\phi) = \frac{1}{2} \sum_{k=1}^\infty a_k \left( x_k(\phi) \right)^2 + \sum_{k=1}^\infty f_k\left( x_k(\phi) \right).
\end{align*}
 Since \(F\) satisfies the PS-condition and the composite map \( L^{-1} \) is a linear homeomorphism (and thus a Keller \(C_c^\infty\)-mapping), by Proposition~\ref{prop:ps_invariance}, \(G\) also satisfies the PS-condition on \( C^\infty[a, b] \).
\section{Nonlinear Operator Problems via \(\mathcal{F}_s\)-Functionals}
In this section, we study nonlinear operator problems by analyzing their equivalent formulation in terms of their spectral coefficients, which naturally leads to functionals defined on \( s \).

	The problems presented in this section are formulated in full generality. However, the specific functions and sequences are chosen to satisfy Conditions~\ref{cond:an} and~\ref{cond:fn}. It should be mentioned that the applicability of the framework is not limited to these particular choices. Any other functions and sequences that satisfy Conditions~\ref{cond:an} and~\ref{cond:fn} can be applied.

\begin{remark}
	Our approach involves transforming  infinite-dimensional nonlinear operator equations into an equivalent infinite system of nonlinear algebraic equations in the sequence space \(s\). While this spectral decomposition diagonalizes the linear part and decouples the system into independent equations for each coefficient, these infinite nonlinear algebraic equations are generally not amenable to closed-form solutions.  By defining and minimizing an associated \(\mathcal{F}_s\)-functional, this method provides a  way to establish the existence, uniqueness,  and the regularity of solutions to these otherwise intractable systems. 
\end{remark}
\begin{problem}\label{p:1}
	Consider the Hilbert space \(L^2(0, \pi)\) with the orthonormal basis of sine functions
	\(
	\{ \phi_n(x) = \sqrt{\tfrac{2}{\pi}} \sin(nx) \}_{n \in \mathbb{N}}
	\).
	Let \(u(x) = \sum_{n=1}^\infty x_n \phi_n(x)\) and \(f(x) = \sum_{n=1}^\infty c_n \phi_n(x)\) be their Fourier sine series expansions. We assume that \((c_n) \in s\). For example, we choose \(c_n = \frac{1}{(n+1)!}\).
	
	Consider the problem of finding a function \(u(x)\) that satisfies the following nonlinear  operator  equation:
	\begin{equation}\label{eq:final}
		u(x) + \mathcal{K}(u(x)) + \mathcal{N}(u(x)) = f(x) \quad \text{in } L^2(0,\pi),
	\end{equation}
	where the operator \( \mathcal{K} \) is a linear, self-adjoint, compact operator defined by its action on the basis functions:
	\[
	\mathcal{K}(\phi_n(x)) = \lambda_n \phi_n(x), \quad \text{with eigenvalues } \lambda_n = \frac{1}{n^2+1}.
	\]
	The operator \( \mathcal{N} \) is defined by
	\[
	\mathcal{N}(u(x)) = \sum_{n=1}^\infty \mu_n g(x_n) \phi_n(x),
	\]
	with \( \mu_n >0 \) and \( (\mu_n) \in s \). We choose \( \mu_n = \frac{1}{n!} \) and \(g=\tanh\).
	
	Substituting the Fourier series for \(u(x)\) and \(f(x)\) into the integral equation and using the orthonormality of the basis functions, the problem transforms into the following infinite system of nonlinear equations for the coefficients \(x = (x_n)\):
	\[
	\left(1 + \frac{1}{n^2+1}\right) x_n + \frac{1}{n!} \tanh(x_n) = c_n, \quad n = 1, 2, \ldots.
	\]
	To solve the problem, we will define a functional whose critical points are the solutions.
	
	We define the functional \( F \colon s \to \mathbb{R} \) corresponding to this system by
	\[
	F(x) = \sum_{n=1}^\infty \left( \frac{1}{2} \left(1 + \frac{1}{n^2+1}\right) x_n^2 + \frac{1}{n!} \log(\cosh(x_n)) - c_n x_n \right).
	\]
	It is easy to check that critical points of \( F \) are solutions of the original nonlinear system.
	Now, we verify that \(F\) is an \( \mathcal{F}_s \)-functional. We have \( a_n = 1 + \frac{1}{n^2+1} \) and \( f_n(t) = \mu_n \log(\cosh(t)) - c_n t \).
	Each \( f_n(t) \) is \( C^\infty(\mathbb{R}) \) and strictly convex.
	Define
	\[
	\beta_n \coloneqq \mu_n + |c_n| = \frac{1}{n!} + \frac{1}{(n+1)!}> 0.
	\]
	The sequence \( (\beta_n) \in s \). We use the inequality
	\(
	0 \le \log(\cosh(t)) \le |t|\) for all \(t \in \mathbb{R}\).
	Therefore,
	\[
	\begin{aligned}
		|f_n(t)| &= |\mu_n \log(\cosh(t)) - c_n t| \\
		&\le \mu_n |t| + |c_n||t| = \beta_n |t| \le \beta_n (1 + t^2).
	\end{aligned}
	\]
	Since \( f_n(t) \) is continuous and strictly convex, and \( f_n(t) \to \infty \) as \( t \to \pm\infty \), it attains a global minimum. This minimum occurs at \(t_n\) such that \(f_n'(t_n) =0\), which gives
	\[
	\tanh(t_n) = \frac{c_n}{\mu_n} = \frac{1/(n+1)!}{1/n!} = \frac{1}{n+1}.
	\]
	The minimum value \(f_n(t_n)\) is bounded below by \(-\gamma_n\), where \( \gamma_n \approx \frac{1}{2n!(n+1)^2} \). Since the series \( \sum \gamma_n \) converges, the lower bound condition is satisfied. Thus, \(F\) is an $\mathcal{F}_s$-functional. By Corollary~\ref{cor:global_minimum}, it attains a unique global minimum $x^* = (x_n^*) \in s$. The coefficients $x_n^*$ are the unique solution to the algebraic system. Since $c_n \ne 0$, the solution $x^*$ is non-trivial.
	
	The corresponding unique solution to the original nonlinear integral equation is given by
	\(
	u^*(x) = \sum_{n=1}^\infty x_n^* \phi_n(x)
	\).
	We can now check the regularity of this solution. Since $(x_n^*) \in s$, for any integer $k \ge 0$, there exists a constant $C_k > 0$ such that
	\[
	|x_n^*| \le C_k n^{-k} \quad \text{for all } n \in \mathbb{N}.
	\]
	The $m$-th derivative of the basis functions is bounded by \( |\phi_n^{(m)}(x)| \le \sqrt{\frac{2}{\pi}} n^m \). Consider the term-wise differentiated series for \(u^{*(m)}(x)\):
	\[
	|x_n^* \phi_n^{(m)}(x)| \le |x_n^*| |\phi_n^{(m)}(x)| \le (C_k n^{-k})  \left(\sqrt{\frac{2}{\pi}} n^m\right).
	\]
	By choosing \(k = m+2\), we can find a constant \(C_{m+2}\) such that
	\[
	|x_n^* \phi_n^{(m)}(x)| \le C_{m+2} n^{-(m+2)}  \sqrt{\frac{2}{\pi}} n^m = \left(C_{m+2}\sqrt{\frac{2}{\pi}}\right) n^{-2}.
	\]
	Since the series $\sum n^{-2}$ converges, the series for $u^{*(m)}(x)$ converges absolutely and uniformly on $[0,\pi]$ for every $m \ge 0$ by the Weierstrass M-test. Therefore, the unique solution \(u^*(x)\) is a smooth function, i.e., \(u^* \in C^\infty[0,\pi]\).
\end{problem}

	As another application of our framework, we now solve a semilinear elliptic PDE. For simplicity and concreteness, we consider the one-dimensional case on $\Omega = (0, \pi)$ with Dirichlet boundary conditions.
\begin{problem}
	The Laplacian \( -\Delta \) on \(L^2(0,\pi)\) with Dirichlet boundary conditions has eigenvalues \(\lambda_n = n^2\) and a corresponding orthonormal basis of eigenfunctions \( \phi_n(x)= \sqrt{\frac{2}{\pi}} \sin(nx) \).
	Let \( f(x) = \sum_{n=1}^\infty c_n \phi_n(x) \), where \((c_n) \in s\).
	
	We aim to find the solution \(u(x)= \sum_{n=1}^\infty x_n \phi_n(x) \) to the following semilinear elliptic PDE:
	\begin{equation}\label{eq:pde}
		-\Delta u + g(u) = f(x) \quad \text{in } L^2(0, \pi)
	\end{equation}
	subject to \(u(0)=u(\pi)=0\). Here, the nonlinear term \(g(u)\) is assumed to act diagonally on the spectral coefficients (i.e., the \(n\)-th coefficient of \(g(u)\) is simply \(g(x_n)\)). We choose the specific nonlinearity \(g(t) = \tanh(t)\).
	
	By expanding \(u(x)\) and \(f(x)\) in the eigenbasis, the PDE transforms into an infinite system of algebraic equations for the coefficients \(x_n\):
	\[
	x_n + \frac{1}{n^2} \tanh(x_n) = \frac{1}{n^2} c_n \quad \text{for each } n=1, 2, \ldots.
	\]
	We define the functional \(F \colon s \to \mathbb{R}\) whose critical points solve this system:
	\[
	F(x) = \sum_{n=1}^\infty \left( \frac{1}{2} x_n^2 + \frac{1}{n^2} \ln(\cosh(x_n)) - \frac{1}{n^2} c_n x_n \right).
	\]
	To prove existence and uniqueness, we verify that this is an \(\mathcal{F}_s\)-functional.
	We set \(a_n = 1\) and \( \tilde{f}_n(t) = \frac{1}{n^2} (\ln(\cosh(t)) - c_n t) \). The coefficient sequence \( (a_n) \) clearly satisfies the required conditions. The function \(\tilde{f}_n(t)\) is smooth and strictly convex since \(\tilde{f}_n''(t) = \frac{1}{n^2} \text{sech}^2(t) > 0\).
	
	For the quadratic growth condition, let \( \beta_n \coloneqq \frac{1+|c_n|}{n^2} \). Since \( (c_n) \in s \), it is a bounded sequence, and thus \( (\beta_n) \in s \). Using the inequality \(0 \le \ln(\cosh(t)) \le |t|\), we can bound the function itself:
	\[
	|\tilde{f}_n(t)| \le \frac{1}{n^2} \left( |\ln(\cosh(t))| + |c_n t| \right) \le \frac{1}{n^2} \left( |t| + |c_n||t| \right) = \beta_n |t| \le \beta_n(1+t^2).
	\]
	This establishes the required growth condition.
	
	For the lower bound, \(\tilde{f}_n(t)\) attains a unique global minimum where \(\tilde{f}_n'(t) = 0\), which yields \( \tanh(t) = c_n \). Since \( (c_n) \in s \), we have \(c_n \to 0\), so for large \(n\), \(|c_n|<1\), guaranteeing a unique minimizer \(t_n = \operatorname{arctanh}(c_n)\).
	The minimum value is \( \tilde{f}_n(t_n) = \frac{1}{n^2} (-\frac{1}{2}\ln(1-c_n^2) - c_n \operatorname{arctanh}(c_n)) \).
	Let \(M_n = -\frac{1}{2}\ln(1-c_n^2) - c_n \operatorname{arctanh}(c_n)\). For large \(n\), as \( c_n \to 0 \), a Taylor expansion implies \( M_n \approx -\frac{1}{2}c_n^2 \).
	We can define \( \gamma_n \coloneqq - \frac{1}{n^2} M_n \ge 0 \). For large \(n\), we have \( \gamma_n \approx \frac{c_n^2}{2n^2} \). Since \( (c_n) \in s \), we can find a constant \(C\) such that \(|c_n| \le C n^{-2}\), which implies \( \gamma_n \) is bounded by terms comparable to \( n^{-6} \). Therefore, the series \( \sum_{n=1}^\infty \gamma_n \) converges absolutely.
	
	Since all conditions are met, \(F\) is an \(\mathcal{F}_s\)-functional and has a unique global minimum \(x^* = (x_n^*)\) in \(s\). The corresponding function \(u^*(x) = \sum_{n=1}^\infty x_n^* \phi_n(x)\) is the unique solution to the algebraic system. Because \(x^* \in s\), a standard argument as in Problem~\ref{p:1} using the Weierstrass M-test shows that the series for \(u^*\) and all its term-wise derivatives converge uniformly, which proves that  \(u^*\) is a smooth solution (\(u^* \in C^\infty[0,\pi]\)) to the original PDE.
\end{problem}
\begin{problem}
	Let \( H_n(t) \) be the Hermite functions, which form an orthonormal basis of \( L^2(\mathbb{R}) \) and are a basis for the Schwartz space \( \mathcal{S}(\mathbb{R}) \). We consider an unknown function $f(t) \in \mathcal{S}(\mathbb{R})$ with its Hermite series expansion $f(t) = \sum_{n=1}^\infty x_n H_{n-1}(t)$, where the coefficients $x_n = \langle f, H_{n-1} \rangle$ form a sequence in $s$.
	
	Consider the following nonlinear spectral problem:
	\[
	\sum_{n=1}^\infty \left( a_n \langle f, H_{n-1} \rangle + \nu_n h(\langle f, H_{n-1} \rangle) - c_n \right) H_{n-1}(t) = 0,
	\]
	where $(c_n)\in s$, $(\nu_n) \in s$ with $\nu_n > 0$, and $(a_n)$ is a sequence of positive real numbers such that $\alpha \le a_n \le M$. The function \(h\) can be any function such that the conditions for an $\mathcal{F}_s$-functional are met. For instance, we choose \(h=\arctan\). We assume the data sequences are such that their ratio is uniformly bounded, i.e., there exists a constant $M_{\max} < \pi/2$ with $|c_n/\nu_n| \le M_{\max}$ for all $n$. This spectral problem is equivalent to the infinite algebraic system:
	\[ a_n x_n + \nu_n \arctan(x_n) - c_n = 0, \quad \text{for } n=1,2,\dots \]
	To prove the existence and uniqueness of a solution, we consider the functional $F \colon s \to \mathbb{R}$ defined by
	\[
	F(x) = \frac{1}{2} \sum_{n=1}^\infty a_n x_n^2 + \sum_{n=1}^\infty \left( \nu_n \left(x_n \arctan(x_n) - \frac{1}{2}\log(1 + x_n^2)\right) - c_n x_n \right).
	\]
	The critical points of $F(x)$ are precisely the solutions to the algebraic system. We now verify that \(F\) is an $\mathcal{F}_s$-Functional. The sequence $(a_n)$ satisfies its condition by definition. Let 
	\[
	f_n(t) = \nu_n (t \arctan(t) - \frac{1}{2}\log(1 + t^2)) - c_n t.
	\]
 Each $f_n(t)$ is smooth and strictly convex since $f_n''(t) = \nu_n/(1+t^2)>0$. For the quadratic growth condition, we use the inequality $0 \le t \arctan(t) - \frac{1}{2}\log(1 + t^2) \le \frac{1}{2} t^2$. Thus,
	\begin{align*}
		|f_n(t)| \le \nu_n \left|t \arctan(t) - \frac{1}{2}\log(1 + t^2)\right| + |c_n t|
		\le \frac{\nu_n}{2} t^2 + |c_n||t|.
	\end{align*}
	Let $\beta_n \coloneqq \nu_n + |c_n|$. Since $(\nu_n), (c_n) \in s$, it follows that $(\beta_n) \in s$ and hence
	\[
	|f_n(t)| \le \nu_n (1+t^2) + |c_n|(1+t^2) = (\nu_n+|c_n|)(1+t^2) = \beta_n (1+t^2).
	\]
	
	The function $f_n(t)$ has a unique global minimum at $t_n^*$ where $\arctan(t_n^*) = c_n/\nu_n$. Our assumption $|c_n/\nu_n| \le M_{\max} < \pi/2$ guarantees that a unique solution $t_n^*$ exists for each $n$.
	Let $\gamma_n \coloneqq -f_n(t_n^*) = \frac{\nu_n}{2}\log(1 + (t_n^*)^2) \ge 0$.
	From the assumption, we have 
	\[
	|t_n^*| = |\tan(c_n/\nu_n)| \le \tan(M_{\max}) \mathrel{=:} K.
	\]
   Thus, the sequence of minimizers $(t_n^*)$ is uniformly bounded. Consequently,
	\[
	\gamma_n \le \frac{\nu_n}{2}\log(1+K^2).
	\]
	Since $(\nu_n) \in s$, it is in $\ell^1$, so $\sum |\nu_n| < \infty$. Therefore, 
	\[ \sum_{n=1}^\infty \gamma_n \le \frac{\log(1+K^2)}{2} \sum_{n=1}^\infty \nu_n < \infty. \]
	Thus, \(F\) is an $\mathcal{F}_s$-functional. By Corollary~\ref{cor:global_minimum}, it attains a unique global minimum $x^* = (x_n^*) \in s$. The unique solution to the nonlinear spectral problem is therefore given by the convergent Hermite series
	\[
	f^*(t) = \sum_{n=1}^\infty x_n^* H_{n-1}(t),
	\]
	and it follows from the isomorphism that $f^*(t)$ is a smooth and rapidly decreasing function, i.e., $f^*(t) \in \mathcal{S}(\mathbb{R})$.
\end{problem}

		\bigskip
	\noindent
	\textbf{Address:} Algebra and Topology Department,   Institute of Mathematics of National Academy of Sciences of Ukraine, 
	Tereshchenkivska st. 3,  01024, Kyiv, Ukraine \\
	\textbf{Email:} kaveh@imath.kiev.ua
\end{document}